\date{\today}
\newif\ifdraft
\date{DRAFT: \today}
\newcommand{\myrunningheads}
{\ifdraft
\pagestyle{myheadings}
\markboth
{Filename: \currfilename \quad DRAFT \mmddyyyydate\today\quad \currenttime }
{Filename: \currfilename \quad DRAFT \mmddyyyydate\today\quad \currenttime }
\else
\pagestyle{myheadings}
\markboth
{Self-similar spreading in merging-splitting models} 
{J.-G. Liu, B. Niethammer and R. L. Pego}
\fi
}
\newcommand{\nwc}{\newcommand}
\newcommand{\hide}[1]{} 
\nwc{\PhiNew}{\Phi_{\star}}
\nwc{\iav}{{i_{\rm av}}}
\nwc{\xav}{{x_{\rm av}}}
\nwc{\feq}{{f_{\rm eq}}}
\nwc{\Feq}{{F_{\rm eq}}}
\nwc{\fheq}{{f^h_{\rm eq}}}
\nwc{\fheqi}{{f^h_{\rm eq\,\it i}}}
\nwc{\Fheq}{{F^h_{\rm eq}}}
\nwc{\finit}{f_{\rm in}}
\nwc{\Finit}{F_{\rm in}}
\nwc{\mui}{{\mu\:\! i}}  
\renewcommand{\epsilon}{\varepsilon}
\nwc{\eps}{\epsilon}
\nwc{\ip}[1]{\langle #1 \rangle}
\nwc{\qref}[1]{(\ref{#1})}
\nwc{\rplus}{{{\mathbb R}_+}}
\nwc{\D}{\partial}
\nwc{\inv}{^{-1}}
\nwc{\ubar}{\underline}
\nwc{\uu}{U}
\newcommand{\supn}{^{(n)}}
\nwc{\supk}{^{(k)}}
\nwc{\supj}{^{(j)}}
\nwc{\wkto}{\xrightarrow{w}}
\nwc{\vto}{\xrightarrow{v}}
\nwc{\nto}{\xrightarrow{n}}
\nwc{\np}{{n+1}}
\nwc{\one}{\mathbbm{1}}
\newcommand{\R}{\mathbb{R}}
\newcommand{\C}{\mathbb{C}}
\newcommand{\N}{\mathbb{N}}
\newcommand{\EE}{\mathbb{E}} 
\newcommand{\calM}{\mathcal{M}}
\newcommand{\calA}{\mathcal{A}}
\def\theequation{\thesection.\arabic{equation}}
\def\paragraph#1{{\bf #1\ }}
\newtheorem{lemma}{Lemma}[section]  
\newtheorem{theorem}[lemma]{Theorem}
\newtheorem{definition}[lemma]{Definition}
\newtheorem{proposition}[lemma]{Proposition}
\theoremstyle{definition}  
\newtheorem{remark}{Remark}[section]
\numberwithin{equation}{section}
\newcommand{\eqlab}[1]{\leavevmode\hfill\refstepcounter{equation}\label{#1}\textup{\tagform@{\theequation}}}
\newcommand{\warning}[1]{\typeout{}\typeout{WARNING: #1 at line \the\inputlineno}\typeout{}}
\newenvironment{todo}[1][TODO]{%
    \ifdraft\else\warning{TODO still present in final version}\fi
    \MakeFramed{\advance\hsize-\width \FrameRestore}\textbf{#1. }}%
    {\endMakeFramed}
    {\end{todo}}
\DeclareMathOperator{\im}{{\rm Im}}
\def\Box{\leavevmode\vbox{\hrule
     \hbox{\vrule\kern4pt\vbox{\kern4pt}%
           \vrule}\hrule}}
\newcounter{appendix}
\def\appendix{\advance\c@appendix by 1
   \def\thesection{\Alph{section}}
   \ifnum\c@appendix=1 \setcounter{section}{-1} \fi
   \@startsection {section}{1}{\z@}{-3.5ex plus -1ex minus 
   -.2ex}{2.3ex plus .2ex}{\Large\bf}}
\def\@part[#1]#2{%
  \ifnum \c@secnumdepth >-2\relax
    \refstepcounter{part}%
    \addcontentsline{toc}{part}{\thepart\hspace{1em}#1}%
  \else
    \addcontentsline{toc}{part}{#1}%
  \fi
  \markboth{}{}%
  {\centering
   \interlinepenalty \@M
   \normalfont
   \ifnum \c@secnumdepth >-2\relax
     \LARGE\bfseries \thepart\ \ %
   \fi
   #2\par}
  }
\nwc{\vp}{\varphi}
\nwc{\blue}[1]{\textcolor{blue}{#1}}
\date{} 
\begin{document}

\title
{Self-similar spreading in a merging-splitting model of animal group size}

\author{Jian-Guo Liu$^{(1)}$, B. Niethammer$^{(2)}$, Robert L. Pego$^{(3)}$}  

\maketitle

\vspace{-0.2 cm}

\begin{center}
1-Department of Physics and Department of Mathematics\\
Duke University,
Durham, NC 27708, USA\\
email: jliu@phy.duke.edu
\end{center}

\begin{center}
2- Institut f\"ur Angewandte Mathematik\\
Universit\"at Bonn\\
Endenicher Allee 60\\
53115 Bonn, Germany\\
email: niethammer@iam.uni-bonn.de
\end{center}

\begin{center}
3-Department of Mathematics
and Center for Nonlinear Analysis\\
Carnegie Mellon University,
Pittsburgh, Pennsylvania, PA 12513, USA\\
email: rpego@cmu.edu
\end{center}

\vspace{-0.2 cm}
\begin{abstract}
In a recent study 
of certain merging-splitting models of animal-group size
(Degond {\it et al.}, {\it J.~Nonl.~Sci.}~27 (2017) 379), 
it was shown that an initial size distribution with infinite first moment
leads to convergence to zero in weak sense, corresponding to 
unbounded growth of group size.  In the present paper we show that 
for any such initial distribution with a power-law tail, 
the solution approaches a self-similar spreading form.  
A one-parameter family of such self-similar solutions exists, 
with densities that are completely monotone, having 
power-law behavior in both small and large size regimes,
with different exponents.

\end{abstract}

\medskip
\noindent
{\bf Key words: } Fish schools, Bernstein functions, complete monotonicity, heavy tails, convergence to equilibrium.

\medskip
\noindent
{\bf AMS Subject classification: }{45J05, 70F45, 92D50, 37L15, 44A10, 35Q99.}

\medskip
\noindent
{\bf Running head:}
{Self-similar spreading in merging-splitting models} 
\vskip 0.4cm

\pagebreak
    
\setlength{\cftbeforesecskip}{4pt}
\setlength{\cftbeforepartskip}{9pt}
\renewcommand{\cftsecfont}{\normalfont}

\ifdraft
\tableofcontents
\fi



\vfil\pagebreak



\setcounter{equation}{0}
\section{Introduction}
\label{intro}

Coagulation-fragmentation equations can be used to describe a large variety of merging and splitting processes, including the evolution of animal group sizes \cite{Ma_etal_JTB11}.
 We refer to \cite{DLP2017} for an extensive discussion of the relevant literature in this particular application area. 

 Here we consider a model with constant coagulation and overall fragmentation rate coefficients that lacks detailed balance and a corresponding $H$-theorem. 
{This model is motivated by a compelling analysis of fisheries data that was carried out by H.-S. Niwa in \cite{Niwa-JTB2003},
and a first mathematical study of the behavior of its solutions was performed in \cite{DLP2017}.
 As demonstrated in \cite{DLP2017},  the nature of} equilibria of this model as well as their domains of attractions can be rigorously studied
using the theory of Bernstein functions. More precisely, it was shown that equilibria can be expressed by a single smooth scaling profile which is not explicit, but it has a convergent power-series representation and its behaviour for small and large cluster sizes
can be completely characterized by different power laws with exponential cutoff \cite[eq. (1.5)-(1.7)]{DLP2017}. 
Furthermore, if the initial data have finite first moment,  solutions converge to  equilibrium in the large time limit.

In addition, it was also shown that if the initial data have infinite first moment, then solutions converge weakly to zero, which means that clusters grow without bound as time goes to infinity.
Our goal in the present paper is to investigate whether this growth behaviour is described by self-similar solutions.
Indeed, we are going to show that there exists a family of self-similar profiles with completely monotone densities, characterized by different power-law tail behaviours for small and large cluster sizes.
Furthermore, if the cumulative mass distribution of the initial data has power law growth for large
cluster sizes, the corresponding solution converges  to the profile whose mass distribution diverges with the same power-law tail. 

Self-similar solutions with fat tails have recently received quite some attention, in particular in the analysis of coagulation equations, starting with work on models with solvable kernels
\cite{B_eternal,MP2004}. For coagulation equations with non-solvable kernels, existence of self-similar profiles with fat tails has been studied in \cite{NV2013,NTV2016,BNV2016}, but to our knowledge this is the first
time that such solutions are found for a class of  coagulation-fragmentation equations. 

We describe both the discrete- and continuous-size versions of the model in section \ref{sec:CF_general}. 
Our proofs use and extend the methods of complex function theory and in particular Bernstein functions  as developed in \cite{MP2004,MP2008,DLP2017} and we give a brief overview of the main definitions and results
in section \ref{sec:prelim}. Our main results are stated in section \ref{sec:main}, while the remaining sections are devoted to their respective proofs. 

\setcounter{equation}{0}
\section{Coagulation-fragmentation Models D and C}
\label{sec:CF_general}

 In this section we describe both the discrete coagulation-fragmentation equations under study as well as their continuous-size analogue.

\subsection{Discrete-size distributions}
The number density of clusters of size $i$ at time $t$ is denoted by $f_i(t)$.
The size distribution $f(t)=(f_i(t))_{i\in\N}$ evolves according to 
discrete coagulation-fragmentation equations, written in strong form as follows:
\begin{eqnarray}
&&\hspace{-1.5cm}
\frac{\partial f_i}{\partial t}(t) = Q_a(f)_i(t) + Q_b(f)_i(t) ,
\label{eq:CF3_disc}\\
&&\hspace{-1.5cm}
Q_a(f)_i(t) = \frac{1}{2} \sum_{j=1}^{i-1} a_{j , i-j}\, f_j(t)  \, f_{i-j}(t) - \sum_{j=1}^\infty a_{i,j} \, f_i(t) \, f_j(t) , 
\label{eq:CF4_disc} \\
&&\hspace{-1.5cm}
Q_b(f)_i(t) = \sum_{j=1}^\infty b_{i,j} \, f_{i+j}(t) - \frac{1}{2} \sum_{j=1}^{i-1} b_{j , i-j} \, f_i(t) \ .
\label{eq:CF5_disc}
\end{eqnarray}
The terms in $Q_a(f)_i(t)$ describe the gain and loss rate of clusters of size $i$ 
due to aggregation or coagulation, and correspondingly the terms in 
$Q_b(f)_i(t)$ describe the rate of breakup or frag\-mentation.

These equations can be written in the following weak
form, suitable for comparing to the continuous-size analog:
We require that for any bounded test sequence $(\varphi_i)$,
\begin{eqnarray}
&&\hspace{-1cm}
\frac{d}{dt} \sum_{i=1}^\infty  \varphi_i \, f_i(t) =
\frac{1}{2} \sum_{i,j=1}^\infty \big( \varphi_{i+j} - \varphi_i - \varphi_j \big) \,  a_{i,j} \, f_i(t) \, f_j(t) \nonumber \\
&&\hspace{1.5cm}
- \frac{1}{2} \sum_{i=2}^\infty \Big( \sum_{j=1}^{i-1} \big( \varphi_i - \varphi_j - \varphi_{i-j} \big) \,  b_{j , i-j} \, \Big) f_i(t) \,. 
\label{eq:CF2_disc}
\end{eqnarray}

The present study deals with the particular case when 
the rate coefficients take the form
\begin{eqnarray}
a_{i,j} = \alpha\,, \qquad b_{i,j} = \frac{\beta}{i+j+1} \,,
\qquad \alpha=\beta=2.
\label{eq:rates_niwa_discD} 
\end{eqnarray}
We refer to the coagulation-fragmentation equations 
\qref{eq:CF3_disc}-\qref{eq:CF5_disc} with the coefficients 
in \qref{eq:rates_niwa_discD} as \textbf{Model D} (D for discrete size).
By a simple scaling we can achieve any values of $\alpha,\beta>0$ 
and so we keep $\alpha=\beta=2$ for simplicity.
As discussed in \cite{DLP2017}, Model D arises as a modification of 
the time-discrete model
written in \cite{Ma_etal_JTB11} which essentially corresponds to the choice
of rate coefficients as
 \begin{eqnarray}
a_{i,j} = \alpha\,, \qquad b_{i,j} = \frac{\beta}{i+j-1}\,.
\label{eq:rates_niwa_disc} 
\end{eqnarray}
These choices correspond to taking the rate that pairs of individual clusters 
coalesce,
and the rate that individual clusters fragment,
to be constants independent of size. 

The modification in \eqref{eq:rates_niwa_discD}, however, permits an analysis in terms of the
{\sl Bernstein transform} of the size-distribution measure
 $\sum_{j=1}^\infty f_j(t)\,\delta_j(dx)$. 
This Bernstein transform is given by 
\begin{equation}\label{eqD:BTdef}
\breve f(\hat s,t) = \sum_{j=1}^\infty (1-e^{-j\hat s}) f_j(t) \ .
\end{equation}
Taking $\varphi_j = 1-e^{-j\hat s}$ in \qref{eq:CF2_disc},
it {can be shown (see \cite[Eq.(10.6)]{DLP2017})}  that $\breve f(\hat s,t)$ satisfies the integro-differential equation
\begin{equation}\label{eqD:BT1}
\D_t \breve f(\hat s,t) = -\breve f^2 - \breve f + 
  \frac2{1-e^{-\hat s}}\int_0^{\hat s} \breve f(r,t) e^{-r}\,dr.
\end{equation}
for $\hat s, t>0$.  By the simple change of variables
\begin{equation}\label{eqD:change}
s= 1-e^{-\hat s}\,, \qquad U(s,t)= \breve f(\hat s,t)\,,
\end{equation}
one finds that \qref{eqD:BT1} for $\hat s\in(0,\infty)$, $t>0$, is equivalent to 
\begin{equation}\label{eqD:BTueq}
\D_t U(s,t) = -U^2-U + 2\int_0^1 U(sr,t)\,dr\,,
\end{equation}
for $s\in(0,1)$, $t>0$.  This equation has the same form that arises in the 
continuous-size case, as we discuss next.

 \subsection{Continuous-size distributions}

For clusters of any real size $x>0$, the size distribution at time $t$ 
is characterized by a measure $F_t$, whose distribution function we denote
using the same symbol:
\[
F_t(x) = \int_{(0,x]} F_t(dx).
\]
The measure $F_t$ evolves according to the following size-continuous coagulation-fragmentation equation, which we write in weak form.
One requires that for any  suitable test function $\varphi (x)$,
\begin{equation}\label{eq:CF2} \begin{split}
&\frac{d}{dt} \int_{\rplus} \varphi(x) \, F_t(dx) = 
\frac{1}{2} \int_{{\mathbb R}_+^2} \big( \varphi (x+y) - \varphi(x) - \varphi(y) \big) a(x,y) 
\, F_t(dx) \, F_t(dy) 
\\ & \quad 
- \frac{1}{2} \int_{{\mathbb R}_+} \Big( \int_0^x \big( \varphi (x) - \varphi(y) - \varphi(x-y) \big) \,  b(y,x-y) \, dy \, \Big)  F_t(dx) . 
\end{split}\end{equation}
The specific rate coefficients that we study correspond to constant coagulation rates
and constant overall binary fragmentation rates with uniform distribution of fragments:
\begin{eqnarray}
&&\hspace{-1cm}
a(x,y) = A\,, \qquad b(x,y) = \frac{B}{x+y}, 
\qquad A=B=2.\label{eq:rates_niwa_cont}
\end{eqnarray}
(Again, by scaling one can achieve any $A,B>0$.) 
We refer to the coagulation-fragmentation equations 
\qref{eq:CF2} with these coefficients as 
\textbf{Model C} (C for continuous size).

For size distributions with density, written as $F_t(dx)=f(x,t)\,dx$,
Model C is written formally in strong form as follows:
\begin{eqnarray}
&&\hspace{-1.5cm}
\D_t f(x,t) = Q_a(f)(x,t) + Q_b(f)(x,t) ,
\label{eq:CF3_Niwa_11}\\
&&\hspace{-1.5cm}
Q_a(f)(x,t) = \int_0^x \, f(y,t) \, f(x-y,t) \, dy -   2 f(x,t) \, \int_0^\infty f(y,t) \, dy , 
\label{eq:CF4_Niwa_11} \\
&&\hspace{-1.5cm}
Q_b(f)(x,t) = -f(x,t)  +  2 \int_x^\infty \frac{f(y,t)}{y} \, dy . 
\label{eq:CF5_Niwa_11}
\end{eqnarray}
Importantly, Model C has a scaling invariance involving dilation of size.
If $F_t(x)$ is any solution and $\lambda>0$, then
\begin{equation}\label{scaleC:F}
\hat F_t(x) := F_t(\lambda x)
\end{equation}
is also a solution. 

When we take as test function $\varphi(x)=1-e^{-sx}$, we find that the Bernstein transform
of $F_t$, defined by 
\begin{equation}
\uu(s,t)=\breve F_t(s) = \int_{\rplus} (1-e^{-sx})\,F_t(dx)\,,
\end{equation} 
satisfies
\begin{equation}
\D_t\uu(s,t) = -\uu^2 -\uu + 2 \int_0^1 \uu(sr,t)\,dr\,.
\label{eqC:Bernstein}
\end{equation}
This equation has exactly the same form as \eqref{eqD:BTueq}.

According to the well-posedness result for Model C established in \cite[Thm.~6.1]{DLP2017},
given any initial $F_0\in \calM_+(0,\infty)$ 
(the set of nonnegative finite measures on $(0,\infty)$), Model C admits
a unique narrowly continuous map $t\mapsto F_t\in \calM_+(0,\infty)$ 
that satisfies \eqref{eq:CF2} for all continuous $\varphi$ on $[0,\infty]$. 
In particular, \eqref{eqC:Bernstein} holds for all $s\in[0,\infty]$.
For $s=\infty$ in particular this means that the zeroth moment 
$m_0(t)=U(\infty,t)$ satisfies the logistic equation
\begin{equation}\label{e:m0}
\D_t m_0(t) = -m_0(t)^2 + m_0(t) \,,
\end{equation}
whence $m_0(t)\to1$ as $t\to\infty$.

\section{Preliminaries}\label{sec:prelim}

All of our main results concern the behavior of solutions of Models C and D
having power-law tails and infinite first moment, 
and the analysis involves the behavior of
their Bernstein transforms.  Hence, before we state our main results it is useful to recall some  basic definitions and results on Bernstein functions and transforms.

A function $g\colon(0,\infty)\to\R$ is \textit{completely monotone}
if it is infinitely differentiable and its derivatives satisfy $(-1)^ng\supn(x)\ge0$
for all real $x>0$ and integer $n\ge0$. By Bernstein's theorem, $g$ is completely
monotone if and only if it is the Laplace transform of some (Radon) measure on 
$[0,\infty)$. 

\begin{definition}
A function $U\colon (0,\infty)\to \R$ is a \textit{Bernstein function} if it is 
infinitely differentiable, nonnegative, and its derivative $U'$ is completely monotone.
\end{definition}

The main representation theorem for these functions 
\cite[Thm. 3.2]{Schilling_etal_Bernstein}
says that 
 a function $U\colon(0,\infty)\to\R$ is a Bernstein function if and only if
it has the representation
\begin{equation}\label{def:Btransform}
U(s) = a_0s+a_\infty+\int_{(0,\infty)} (1-e^{-sx})\,F(dx)\,, \quad s\in(0,\infty),
\end{equation}
where $a_0$, $a_\infty\ge0$ and $F$ is a measure satisfying
$\int_{(0,\infty)} (x\wedge1)F(dx)<\infty$. 
In particular, the triple $(a_0,a_\infty,F)$ uniquely determines $U$ and vice versa.

We point out that $U$ determines $a_0$ and $a_\infty$ via the relations
\begin{equation}\label{eq:a0ainfty}
a_0 = \lim_{s\to\infty} \frac{U(s)}s\,, \qquad
a_\infty = U(0^+)=\lim_{s\to0}U(s)\, .
\end{equation}

Whenever \qref{def:Btransform} holds, we call $U$
the {\sl Bernstein transform} of the L\'evy triple
$(a_0,a_\infty,F)$.  If $a_0=a_\infty=0$, we call $U$ the Bernstein
transform of the L\'evy measure $F$, and write $U=\breve F$, in accordance with the definitions in section \ref{sec:CF_general}.

We will also make use of  
the theory of so-called  \textit{complete} Bernstein functions, 
as developed in \cite[Chap. 6]{Schilling_etal_Bernstein}:
\begin{theorem} \label{thm:CBF}
The following are equivalent.
\begin{itemize}
\item[(i)] The L\'evy measure $F$ in \qref{def:Btransform}
has a completely monotone density $g$, so that
\begin{equation}\label{eq:CBF}
U(s) = a_0s+a_\infty+\int_{(0,\infty)} (1-e^{-sx}) g(x)\,dx\,, \quad s\in(0,\infty).
\end{equation}
\item[(ii)] $U$ is a Bernstein function that admits a holomorphic extension to the cut plane
$\C\setminus(-\infty,0]$ satisfying $(\im s)\im U(s) \ge0$. 
\end{itemize}
\end{theorem}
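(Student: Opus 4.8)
The plan is to establish the two implications separately, using Bernstein's theorem (recalled above) to move between completely monotone densities and Laplace transforms, together with the elementary identity
\begin{equation*}
\frac{s}{t(s+t)}\;=\;\frac1t-\frac1{s+t}\;=\;\int_0^\infty (1-e^{-sx})\,e^{-tx}\,dx\,,\qquad s,t>0,
\end{equation*}
which converts the kernel $1-e^{-sx}$ of \eqref{def:Btransform} into the kernel $s/(t(s+t))$ of a complex-analytic representation, and back.

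Suppose (i) holds. By Bernstein's theorem write $g(x)=\int_{[0,\infty)}e^{-tx}\,\mu(dt)$ for a nonnegative Radon measure $\mu$ on $[0,\infty)$; the L\'evy condition $\int_{(0,\infty)}(x\wedge1)g(x)\,dx<\infty$ is readily seen to force $\mu(\{0\})=0$ together with $\int_{(0,1]}t^{-1}\mu(dt)+\int_{(1,\infty)}t^{-2}\mu(dt)<\infty$. Substituting the representation of $g$ into \eqref{eq:CBF} and applying Fubini's theorem (legitimate by the integrability just noted) yields
\begin{equation*}
U(s)=a_0s+a_\infty+\int_{(0,\infty)}\frac{s}{t(s+t)}\,\mu(dt)\,,\qquad s>0.
\end{equation*}
Each integrand $s\mapsto s/\bigl(t(s+t)\bigr)$ is holomorphic on $\C\setminus\{-t\}\supset\C\setminus(-\infty,0]$, and the integral converges locally uniformly on the cut plane, so by the identity theorem this formula gives the holomorphic extension of $U$ claimed in (ii). Writing $s=\sigma+i\tau$ one computes $\im\bigl(s/(t(s+t))\bigr)=\tau/|s+t|^2$, whence
\begin{equation*}
\im U(s)=\tau\Bigl(a_0+\int_{(0,\infty)}\frac{\mu(dt)}{|s+t|^2}\Bigr),
\end{equation*}
which has the same sign as $\tau=\im s$; thus $(\im s)\,\im U(s)\ge0$, and $U$ is a Bernstein function by the representation theorem \eqref{def:Btransform}. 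This is (ii).

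Conversely, suppose (ii) holds. The condition $(\im s)\,\im U(s)\ge0$ says that the restriction of $U$ to the upper half-plane $\HH=\{\im s>0\}$ is a Pick (Nevanlinna--Herglotz) function, and by hypothesis $U$ is real on $(0,\infty)$ and holomorphic across it. The Nevanlinna representation of Pick functions, combined with Stieltjes inversion (the reflection principle), which places the representing boundary measure on $(-\infty,0)$ because $\im U\to0$ along $(0,\infty)$, and with the Bernstein structure of $U$ (so $U\ge0$ on $(0,\infty)$, with $a_0=\lim_{s\to\infty}U(s)/s$ and $a_\infty=U(0^+)$ finite, cf.\ \eqref{eq:a0ainfty}), allows one to rewrite the representation, after the substitution $t\mapsto-t$, in the form
\begin{equation*}
U(s)=a_0s+a_\infty+\int_{(0,\infty)}\frac{s}{t(s+t)}\,\mu(dt)\,,\qquad \int_{(0,1]}t^{-1}\mu(dt)+\int_{(1,\infty)}t^{-2}\mu(dt)<\infty.
\end{equation*}
Now insert the elementary identity, apply Fubini in the reverse direction, and set $g(x)=\int_{(0,\infty)}e^{-tx}\,\mu(dt)$: then $g$ is completely monotone by Bernstein's theorem, the integrability of $\mu$ makes it a legitimate L\'evy density, and $U(s)=a_0s+a_\infty+\int_0^\infty(1-e^{-sx})g(x)\,dx$. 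By uniqueness of the L\'evy triple in \eqref{def:Btransform}, the L\'evy measure of $U$ is $g(x)\,dx$, which is (i).

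The step I expect to be the real work is the passage in (ii)$\Rightarrow$(i) from the bare Pick function property to the explicit integral representation: one must marry the general Herglotz representation on $\HH$ with the reflection principle to pin the boundary measure to the negative axis, and then use the Bernstein normalization not only to read off $a_0$ (behaviour as $s\to\infty$) and $a_\infty$ (behaviour as $s\to0^+$) but also to upgrade the weak Herglotz integrability $\int(1+t^2)^{-1}\mu(dt)<\infty$ to the sharper bound $\int_{(0,1]}t^{-1}\mu(dt)<\infty$ forced by finiteness of $U(0^+)$, which is precisely what makes $g$ a valid L\'evy density. Everything else --- Fubini in both directions, the imaginary-part computation, the identity theorem, and the two invocations of Bernstein's theorem --- is routine.
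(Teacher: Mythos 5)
The paper does not prove this theorem at all: it is quoted verbatim from the theory of complete Bernstein functions in \cite[Chap.~6]{Schilling_etal_Bernstein}, so there is no internal proof to compare against. Your argument is precisely the standard proof given in that source: the direction (i)$\Rightarrow$(ii) via $\int_0^\infty(1-e^{-sx})e^{-tx}\,dx = s/(t(s+t))$ and the explicit computation $\im U(s)=\im s\,\bigl(a_0+\int_{(0,\infty)}|s+t|^{-2}\mu(dt)\bigr)$ is complete and correct, and the direction (ii)$\Rightarrow$(i) is a correct outline whose acknowledged ``real work'' --- the Nevanlinna--Herglotz representation on $\HH$, Stieltjes inversion to confine the boundary measure to the negative axis (including ruling out an atom at $0$), and the use of $U(0^+)=a_\infty<\infty$ to upgrade $\int(1+t^2)^{-1}\mu(dt)<\infty$ to $\int_{(0,1]}t^{-1}\mu(dt)<\infty$ --- is exactly the content of the cited textbook proof, correctly identified even if not carried out in detail here.
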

In complex function theory,
a function holomorphic on the upper half of the complex plane that leaves
it invariant is called a \textit{Pick function} (alternatively a 
\textit{Herglotz} or \textit{Nevalinna} function).
Condition (ii) of the theorem above says simply that $U$ is a Pick function
analytic and nonnegative on $(0,\infty)$. 
Such functions are called \textit{complete Bernstein functions} in 
\cite{Schilling_etal_Bernstein}.

The power-law tail behavior of size
distributions is related to power-law behavior of Bernstein transforms
near the origin through use of Karamata's Tauberian theorem
\cite[Thm.~III.5.2]{Feller} and Lemma 3.3 of \cite{MP2004}. 
To explain, suppose a measure $F$ on $(0,\infty)$ has a density $f$
satisfying
\begin{equation}\label{a:f}
f(x)\sim Ax^{-\alpha-1}\,,\quad x\to\infty.
\end{equation} 
Necessarily $\alpha\in(0,1]$ if $F$ is finite with infinite first moment. 
The derivative $\D_s\breve F$ of the Bernstein transform of $F$
is the Laplace transform of the measure with distribution function
\begin{equation}\label{a:F}
\int_0^x y\,F(dy) \sim \frac{A x^{1-\alpha} }{1-\alpha}
\end{equation}
for $\alpha\in(0,1)$.  By Karamata's theorem, this is equivalent to 
\begin{equation}\label{a:L}
\D_s\breve F(s) \sim 
\frac{A \Gamma(2-\alpha)}{1-\alpha}  s^{\alpha-1}, \quad s\to0.
\end{equation}
Then by Lemma 3.3 of \cite{MP2004} this is equivalent to  
\begin{equation}\label{a:U}
\breve F(s) \sim 
\frac{A \Gamma(2-\alpha)}{\alpha(1-\alpha)}  s^\alpha, \quad s\to0.
\end{equation}

\section{Main results}\label{sec:main}

The choice of coefficients in the asymptotic expressions below 
is made to simplify Bernstein transform calculations in the sequel. 
{In the following we denote by
\[
 F_t(x):=\int_{(0,x]} F_t(dx)
\]
the cumulative distribution function.}

\begin{theorem}\label{t:exist} (Self-similar solutions for Model C)
For each $\alpha\in(0,1)$ and $\lambda>0$,  
Model C admits a unique self-similar solution having the form
\begin{equation}\label{d:Fa}
F_t(x) = F_{\star\alpha}(\lambda xe^{-\beta t}),
\end{equation}
where $F_{\star\alpha}$ is a probability measure having the tail behavior
\begin{equation}
\int_0^x y F_{\star\alpha}(dy) \sim 
\frac{ \alpha} {\Gamma(2-\alpha)} x^{1-\alpha}\,,
\quad x\to\infty.
\end{equation}
For this solution, 
\begin{equation}\label{e:beta}
\beta = \frac{1-\alpha}{\alpha(1+\alpha)},
\end{equation}
and $F_{\star\alpha}$ has a completely monotone density
$f_{\star\alpha}$ 
having the following asymptotics:
\begin{equation}\label{e:faasym}
f_{\star\alpha}(x) \sim  
\begin{cases}
\displaystyle 
\frac{\alpha}{\Gamma(1-\alpha)} x^{-\alpha-1} & \quad x\to\infty\,,\\[8pt]
\displaystyle 
\frac{\hat c}{\Gamma(-\hat\alpha)} x^{\hat\alpha-1} & \quad x\to 0^+\,,
\end{cases}
\end{equation}
where the constants $\hat\alpha\in(0,1)$, $\hat c>0$ are as described in 
Lemma~\ref{lem:solbeta}. 
\end{theorem}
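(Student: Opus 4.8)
\emph{Step 1: reduction to a profile equation.} My plan is to pass to Bernstein transforms. Given a candidate $F_t(x)=F_{\star\alpha}(\lambda xe^{-\beta t})$, write $\Phi:=\breve F_{\star\alpha}$ for the Bernstein transform of the profile measure. A change of variables shows $\breve F_t(s)=\Phi(\sigma)$ with $\sigma=se^{\beta t}/\lambda$, hence $\D_t\breve F_t(s)=\beta\sigma\Phi'(\sigma)$ and $\int_0^1\breve F_t(sr)\,dr=\int_0^1\Phi(\sigma r)\,dr$; substituting into \eqref{eqC:Bernstein} shows that $F_t$ solves Model~C if and only if
\[
\beta\sigma\Phi'(\sigma)=-\Phi(\sigma)^2-\Phi(\sigma)+2\int_0^1\Phi(\sigma r)\,dr=-\Phi^2-\Phi+\frac2\sigma\int_0^\sigma\Phi(\rho)\,d\rho,\qquad\sigma>0,
\]
equivalently the second-order ODE $\beta\sigma^2\Phi''+\sigma\Phi'(2\beta+1+2\Phi)+\Phi^2-\Phi=0$ obtained by multiplying by $\sigma$ and differentiating. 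The side conditions needed are: $\Phi$ a complete Bernstein function, $\Phi(0^+)=0$, $\Phi(\infty)=1$, and $\Phi(\sigma)\sim\sigma^\alpha$ as $\sigma\to0$. The value $\Phi(\infty)=1$ is forced, not imposed: $m_0(t)=\breve F_t(\infty)$ is constant along a self-similar solution and so must be a fixed point of the logistic equation \eqref{e:m0}; and by the Tauberian chain \eqref{a:f}--\eqref{a:U} the normalization $\Phi(\sigma)\sim\sigma^\alpha$ is exactly equivalent to the stated tail $\int_0^xyF_{\star\alpha}(dy)\sim\frac{\alpha}{\Gamma(2-\alpha)}x^{1-\alpha}$. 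Since the profile equation is invariant under $\sigma\mapsto\mu\sigma$ (this is precisely the dilation freedom \eqref{scaleC:F}), which multiplies the coefficient of $\sigma^\alpha$ in $\Phi$ by $\mu^\alpha$, the normalization $\Phi(\sigma)\sim\sigma^\alpha$ selects a unique representative, and the various $\lambda>0$ then sweep out the full self-similar family.

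\emph{Step 2: the profile on $(0,\infty)$.} Setting $t=\log\sigma$ turns the second-order ODE into the autonomous planar system $\dot u=\tfrac1\beta(w-u^2)$, $\dot w=-\tfrac{\beta+1}{\beta}w+\tfrac1\beta u^2+u$, with $u=\Phi$, $w=\beta\dot u+u^2$. Its only equilibria in the relevant range are $(0,0)$ (the state $\sigma\to0$) and $(1,1)$ ($\sigma\to\infty$). Linearizing at $(0,0)$ gives a saddle whose positive eigenvalue $\mu_+$ solves $\beta\mu_+^2+(\beta+1)\mu_+-1=0$; requiring the exit exponent to equal $\alpha$ forces $\mu_+=\alpha$, hence $\beta=\frac{1-\alpha}{\alpha(1+\alpha)}$, which is \eqref{e:beta}. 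Linearizing at $(1,1)$ gives a stable node with negative eigenvalues $-\hat\alpha_\pm$, where $\beta\hat\alpha_\pm^2-(\beta+3)\hat\alpha_\pm+1=0$; this quadratic is positive at $0$ and negative at $1$, so its smaller root $\hat\alpha:=\hat\alpha_-$ lies in $(0,1)$, a generic orbit reaches the node tangent to the slow eigendirection, so $1-\Phi(\sigma)$ decays like a constant times $\sigma^{-\hat\alpha}$, and that amplitude is the constant $\hat c$; this indicial bookkeeping is what I would isolate as Lemma~\ref{lem:solbeta}. The profile itself is then the heteroclinic orbit from $(0,0)$ to $(1,1)$: follow the one-dimensional unstable manifold of $(0,0)$ (which leaves into the region $u>0$, $w>u^2$), build an invariant trapping region in which the orbit stays bounded with $0<u<1$, and use planarity (Poincar\'e--Bendixson, absence of cycles) to conclude the $\omega$-limit is the node $(1,1)$; uniqueness modulo the scaling $\sigma\mapsto\mu\sigma$ is immediate since the unstable manifold is one-dimensional. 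This produces a smooth, strictly increasing solution $\Phi$ with the two prescribed endpoint powers.

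\emph{Step 3: complete monotonicity.} The crucial and hardest point is that $\Phi$ is not merely increasing but a complete Bernstein function; only then is it a genuine Bernstein transform with a completely monotone density, by Theorem~\ref{thm:CBF}. I would establish this by the complex-analytic method of \cite{MP2004,MP2008,DLP2017}: the profile equation has a singular point only at $\sigma=0$, so $\Phi$ continues holomorphically to the slit plane $\C\setminus(-\infty,0]$ provided it does not blow up, which the trapping estimates of Step~2 control; one then verifies the Pick property $\im\Phi(\sigma)>0$ for $\im\sigma>0$ by noting that $\Phi(\sigma)\sim\sigma^\alpha$ has positive imaginary part near the origin in the upper half-plane and ruling out a first zero of $\im\Phi$ there via a maximum-principle/argument-tracking argument driven by the (non-local) differential equation. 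An alternative route is to realize $\Phi$ dynamically as a locally uniform limit of $\breve F_t$ along the flow started from an initial measure with completely monotone density and power-law tail of exponent $\alpha$, using that the complete Bernstein class is preserved under \eqref{eqC:Bernstein} and closed under such limits. Either way, once $\Phi$ is complete Bernstein, $\Phi(0^+)=0$ gives $a_\infty=0$, $\Phi(\infty)=1$ gives that $F_{\star\alpha}$ is a probability measure, and Theorem~\ref{thm:CBF} supplies the completely monotone density $f_{\star\alpha}$.

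\emph{Step 4: asymptotics and uniqueness.} Finally, run the two endpoint behaviors of $\Phi$ through Karamata's theorem \cite{Feller} and Lemma~3.3 of \cite{MP2004}: $\Phi(\sigma)\sim\sigma^\alpha$ as $\sigma\to0$ reverses the chain \eqref{a:f}--\eqref{a:U} to give $\int_0^xyF_{\star\alpha}(dy)\sim\frac{\alpha}{\Gamma(2-\alpha)}x^{1-\alpha}$ and $f_{\star\alpha}(x)\sim\frac{\alpha}{\Gamma(1-\alpha)}x^{-\alpha-1}$ as $x\to\infty$, while the decay of $1-\Phi(\sigma)$ like a constant times $\sigma^{-\hat\alpha}$ at $\sigma=\infty$, applied near the origin, yields $f_{\star\alpha}(x)\sim\frac{\hat c}{\Gamma(-\hat\alpha)}x^{\hat\alpha-1}$ as $x\to0^+$; the specific constants in \eqref{e:faasym} and in the stated tail were arranged precisely so that these come out as written. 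Uniqueness of the self-similar solution of the form \eqref{d:Fa} with the prescribed tail, for each $\alpha$ and $\lambda$, then follows from the uniqueness of $\Phi$ in Step~2, since any such solution generates one. The main obstacle throughout is Step~3: upgrading the connecting orbit from a smooth monotone solution of the ODE to a genuine complete Bernstein function.
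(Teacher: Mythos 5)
Your Steps 1, 2 and 4 track the paper's own route closely: the reduction to the profile equation for the Bernstein transform, the planar phase-plane construction of the heteroclinic connection with exit exponent $\alpha$ at the saddle and decay exponent $\hat\alpha$ at the node (your two quadratics are exactly the eigenvalue relations \eqref{e:eigsa}, giving \eqref{e:beta}), and the Tauberian translation of the endpoint behaviors of $u_\alpha$ into the asymptotics \eqref{e:faasym} are all as in Lemma~\ref{lem:solbeta} and Section~\ref{s:Tauber}. Your Step 3 ``alternative route'' is in fact the paper's actual argument (Lemma~\ref{l:cmFa}): complete monotonicity is preserved by the flow, complete Bernstein functions are closed under dilation and pointwise limits, so the profile inherits the Pick property from the dynamics.

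The genuine gap is that neither version of your Step 3 is carried out, and this is precisely where the real work lies. Your primary route --- holomorphic continuation of the profile to the slit plane plus a maximum-principle/argument-tracking verification of $\im\Phi>0$ --- is only a hope; the nonlocal averaging term makes such a direct argument unclear, and the paper explicitly remarks that no argument avoiding the representation theorem and the dynamic realization is known. Your fallback route silently assumes the one statement that must be proved: that the rescaled transforms $U(se^{-\beta t},t)$ of a solution with $U_0(s)\sim s^\alpha$ converge (locally uniformly) to $u_\alpha(s)$. In the paper this is Proposition~\ref{p:compareU}, and its proof requires a new tool, the comparison principle of Proposition~\ref{p:compare} for the nonlocal equation \eqref{eqC:Bernstein} (proved via Hardy's inequality for the averaging operator), which allows one to sandwich $U(s,t)$ between the dilated exact solutions $u_\alpha(cse^{\beta t})$ and $u_\alpha(Cse^{\beta t})$ near $s=0$ and propagate the trap forward in time. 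Without this (or some substitute mechanism), you cannot conclude that $u_\alpha$ is a (complete) Bernstein function, hence neither that $F_{\star\alpha}$ exists as a probability measure with a completely monotone density --- so the heart of the theorem remains unproved; note the paper's logic is exactly that ``existence depends on stability.'' A smaller gloss: in Step 4 the passage from $\int_0^x y f_{\star\alpha}(y)\,dy\sim \alpha x^{1-\alpha}/\Gamma(2-\alpha)$ to $xf_{\star\alpha}(x)\sim \alpha x^{-\alpha}/\Gamma(1-\alpha)$ is a Tauberian step that needs a monotonicity/selection argument (supplied in the paper by the complete monotonicity of $f_{\star\alpha}$, following Feller's proof), not merely a reversal of the chain \eqref{a:f}--\eqref{a:U}.
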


\begin{theorem}\label{t:mainC} (Large-time behavior for Model C with algebraic tails)
Suppose that the initial data for Model C satisfies
\begin{equation}\label{eqC:ics}
\int_0^x yF_0(dy) \sim 
\int_0^x yF_{\star\alpha}(\lambda\,dy) 
\sim \frac{ \alpha\lambda^{-\alpha}} {\Gamma(2-\alpha)} x^{1-\alpha}\,,
\quad x\to\infty,
\end{equation}
where $\alpha\in(0,1)$, $\lambda>0$.  Then for every $x\in[0,\infty]$ we have
\begin{equation}\label{te:limitC}
F_t(x e^{\beta t}) \to F_{\star\alpha}(\lambda x) \,,
\quad t\to\infty.
\end{equation}
\end{theorem}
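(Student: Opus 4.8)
The plan is to pass to Bernstein transforms, reduce \eqref{te:limitC} to a convergence statement for a rescaled evolution whose only stationary state with the correct tail is $\breve F_{\star\alpha}(\cdot/\lambda)$, and then combine a priori bounds with a compactness/rigidity argument. Throughout write $U(s,t)=\breve F_t(s)$, which by \eqref{eqC:Bernstein} solves $\D_t U=-U^2-U+2\int_0^1 U(sr,t)\,dr$, and introduce the dilated transform $V(s,t):=U(se^{-\beta t},t)$. One checks directly that $V(\cdot,t)$ is the Bernstein transform of the measure whose distribution function is $x\mapsto F_t(xe^{\beta t})$, and that $V$ solves the scaled equation
\[
\D_t V(s,t)=-\beta s\,\D_s V(s,t)-V^2-V+2\int_0^1 V(sr,t)\,dr,
\]
for which $V_\infty(s):=\breve F_{\star\alpha}(s/\lambda)$ is a stationary solution (this is the profile equation of Lemma~\ref{lem:solbeta} after the change of variables $\sigma=s/\lambda$). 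By the continuity theorem for Bernstein transforms of finite measures, together with $m_0(t)\to1=F_{\star\alpha}((0,\infty))$ and the fact that $F_{\star\alpha}$ is atomless (Theorem~\ref{t:exist} gives it the density $f_{\star\alpha}$), the assertion \eqref{te:limitC} for all $x\in[0,\infty]$ is equivalent to $V(s,t)\to V_\infty(s)$ for every $s\in(0,\infty)$. Finally, Karamata's theorem and Lemma~3.3 of \cite{MP2004}, applied to \eqref{eqC:ics} exactly as in \eqref{a:F}--\eqref{a:U}, show that the hypothesis is equivalent to $U(s,0)\sim(s/\lambda)^\alpha$, i.e. $V(s,0)\sim(s/\lambda)^\alpha$ as $s\to0^+$.

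For a priori bounds, note $0\le V(s,t)\le U(\infty,t)=m_0(t)\le M:=\max(m_0(0),1)$, and Bernstein-ness in $s$ is preserved by the flow (as in \cite{DLP2017}), so $\{V(\cdot,t)\}_{t\ge0}$ is a family of Bernstein functions bounded by $M$; consequently $\D_s V(\cdot,t)$ is completely monotone and bounded by $2M/s_0$ on each $[s_0,\infty)$, uniformly in $t$. The key structural fact is that $s^\alpha$ is an eigenfunction of $\calL[\phi](s):=2\int_0^1\phi(sr)\,dr-\phi(s)$ with eigenvalue $\tfrac{1-\alpha}{1+\alpha}$, which by \eqref{e:beta} equals $\alpha\beta$; hence in the scaled equation the dilation drift $-\beta s\,\D_s$ exactly cancels this eigenvalue on the $s^\alpha$ mode, and near $s=0$ only the quadratic term (of order $s^{2\alpha}$) survives. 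I would make this precise with a comparison argument: for each $\varepsilon>0$ the functions $(\lambda^{-\alpha}\pm\varepsilon)s^\alpha$, truncated at a suitable level and matched to the uniform bound $M$, yield a super- and a sub-solution of the scaled equation sandwiching $V(\cdot,0)$ near $s=0$; propagating these bounds gives $(\lambda^{-\alpha}-\varepsilon)s^\alpha\le V(s,t)\le(\lambda^{-\alpha}+\varepsilon)s^\alpha$ for all $t\ge0$ and all sufficiently small $s$, i.e. $V(s,t)\sim(s/\lambda)^\alpha$ as $s\to0^+$, uniformly in $t$.

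For the compactness and rigidity step, given $t_n\to\infty$ consider the time-shifts $V_n(s,\tau):=V(s,t_n+\tau)$; by the bounds above and the equation (which controls $\D_\tau V_n$ and $\D_s V_n$ on compact subsets of $(0,\infty)$), an Arzel\`a--Ascoli/normal-families argument extracts a subsequence along which $V_n\to W$ locally uniformly on $(0,\infty)\times\R$, with $W(\cdot,\tau)$ a Bernstein function for each $\tau$, $0\le W\le M$, $W$ solving the scaled equation for all $\tau\in\R$, and $W(s,\tau)\sim(s/\lambda)^\alpha$ as $s\to0^+$ for every $\tau$. It remains to show that the only such entire solution is $V_\infty$. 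I would argue in two substeps: first, that $W$ must be $\tau$-independent --- using a Lyapunov functional for the scaled equation that is nonincreasing along the flow, bounded below, and constant only on stationary solutions, so that its constancy along the entire trajectory $\tau\mapsto W(\cdot,\tau)$ forces stationarity; second, that a Bernstein stationary solution of the scaled equation with $W(s)\sim(s/\lambda)^\alpha$ near $s=0$ is unique, which is precisely the uniqueness part of Theorem~\ref{t:exist}. Since every subsequential limit equals $V_\infty$, we obtain $V(s,t)\to V_\infty(s)$ for all $s\in(0,\infty)$, and the reduction above gives \eqref{te:limitC}.

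I expect the rigidity step to be the main obstacle on two counts. The $s^\alpha$ mode is \emph{neutrally} stable for the scaled flow, so the uniform-in-time propagation of its coefficient in the second paragraph is delicate: one must choose the patched sub/super-solutions carefully and control the non-local term $\calL$, whose positive spectral bound reflects the growth of $m_0$. And identifying a \emph{strict} Lyapunov functional for the non-local, drift-perturbed scaled equation --- or, failing that, running the sandwiching argument globally in $s$ by exploiting the monotone dependence of the self-similar family $\breve F_{\star\alpha}(se^{\beta t}/\lambda)$ on $\lambda$ --- is the real technical work; the complex-analytic methods of \cite{MP2004,MP2008} applied to the holomorphic extension of $U(\cdot,t)$ to $\C\setminus(-\infty,0]$ offer an alternative route to the same rigidity.
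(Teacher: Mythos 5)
Your first reduction (pass to Bernstein transforms, translate \eqref{eqC:ics} into $U_0(s)\sim\lambda^{-\alpha}s^\alpha$ via Karamata and Lemma~3.3 of \cite{MP2004}, and translate \eqref{te:limitC} into pointwise convergence of $U(se^{-\beta t},t)$ using Proposition~\ref{p:narrowfat}) matches the paper. The gaps are in the dynamical part. First, your barrier functions do not work as stated: $Cs^\alpha$ is indeed a supersolution of the rescaled equation because $\beta\alpha+1=\tfrac{2}{1+\alpha}$ leaves only $+C^2s^{2\alpha}$ on the favorable side, but by the same computation $cs^\alpha$ is \emph{never} a subsolution (the quadratic term has the wrong sign for every $s>0$), so "truncated at a suitable level and matched to $M$" hides an unconstructed object, and you yourself flag that the neutral stability of the $s^\alpha$ mode makes the uniform-in-time propagation delicate. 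Second, and more seriously, even granting uniform tail control near $s=0$, your compactness argument only delivers eternal solutions $W(\cdot,\tau)$ of the rescaled equation, and the rigidity step that forces $W$ to be stationary rests on a Lyapunov functional that you do not exhibit; the model is precisely one that lacks detailed balance and an $H$-theorem, and no such functional is known. Uniqueness of the \emph{stationary} profile (Lemma~\ref{lem:solbeta}, from the phase plane) does not close the loop without $\tau$-independence of the limit; you would instead need a classification of eternal solutions with the prescribed small-$s$ behavior, which is not established anywhere.

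The paper bypasses both difficulties by using \emph{exact} solutions as barriers: for every $c>0$ the function $(s,t)\mapsto u_\alpha(cse^{\beta t})$ solves \eqref{eqC:Bernstein} exactly, so no sub/supersolution verification is needed, and no compactness or Lyapunov argument either. Since $u_\alpha(cs)\sim c^\alpha s^\alpha$ as $s\to0$, the hypothesis $U_0(s)\sim s^\alpha$ lets one pick $c<1<C$ with $u_\alpha(cs)\le U_0(s)\le u_\alpha(Cs)$ on some interval $[0,S_0]$ \emph{only}; the comparison principle of Proposition~\ref{p:compare} is local in $s$ because the nonlocal term $(\calA u)(s)=\int_0^1u(sr)\,dr$ only samples $[0,s]$, so this ordering propagates on $[0,S_0]$ for all $t$, and after substituting $s\mapsto se^{-\beta t}$ it holds on the expanding intervals $[0,S_0e^{\beta t}]$, giving $u_\alpha(cs)\le U(se^{-\beta t},t)\le u_\alpha(Cs)$ locally uniformly and hence convergence as $c,C\to1$ (Proposition~\ref{p:compareU}); the endpoint $s=\infty$ is handled by the logistic equation \eqref{e:m0}. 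This is essentially the "fallback" you mention in your last sentence (sandwiching by the dilated self-similar family), but note that it does not require a global-in-$s$ sandwich of the initial data — only the behavior near $s=0$ matters, which is exactly what the tail hypothesis provides — and once it is carried out, the compactness/rigidity machinery in your third paragraph becomes unnecessary. As written, your proposal leaves the decisive step (rigidity, or equivalently the uniform sandwich) unproven.
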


\begin{theorem}\label{t:mainD} 
(Large-time behavior for Model D with algebraic tails)
Suppose that the initial data for Model D satisfies
\begin{equation}\label{eqD:ics}
\sum_{1\le k\le x} k f_k(0) \sim
\int_0^x yF_{\star\alpha}(\lambda\,dy) 
\sim \frac{ \alpha\lambda^{-\alpha}} {\Gamma(2-\alpha)} x^{1-\alpha}\,,
\quad x\to\infty,
\end{equation}
where $\alpha\in(0,1)$, $\lambda>0$.  Then for every $x\in[0,\infty]$ we have
\begin{equation}\label{te:limitD}
\sum_{1\le k\le xe^{\beta t}} f_k(t) 
\to F_{\star\alpha}(\lambda x) \,,
\quad t\to\infty.
\end{equation}
\end{theorem}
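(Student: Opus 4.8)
The plan is to transfer the conclusion of Theorem~\ref{t:mainC} to Model~D through the Bernstein transform, exploiting the fact that the transformed evolution equations coincide. Recall from Section~\ref{sec:CF_general} that, with $s=1-e^{-\hat s}$ and $U(s,t)=\breve f(\hat s,t)$, the Model~D transform satisfies \eqref{eqD:BTueq}, which is literally the same equation \eqref{eqC:Bernstein} that governs the Model~C transform. The essential structural point is that the right-hand side of \eqref{eqD:BTueq} at a point $s$ involves $U$ only through the values $U(sr,t)$ with $r\in(0,1)$, that is, only at arguments in $(0,s]$; hence the restriction of $U$ to any interval $(0,s_0)$ with $s_0\le 1$ evolves by exactly the same closed system as a Model~C transform on that interval. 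Since the long-time analysis behind Theorem~\ref{t:mainC} only ever probes the transform at arguments shrinking to $0$, I expect it to carry over to the Model~D transform on $(0,1)$ with no essential change.

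First I would recast the hypothesis \eqref{eqD:ics} as an asymptotic for the transform near the origin. Since $\partial_{\hat s}\breve f(\hat s,0)=\sum_k k e^{-k\hat s} f_k(0)$ is the Laplace transform of the measure $\sum_k k f_k(0)\,\delta_k$, whose distribution function is $\sum_{1\le k\le x} k f_k(0)$, Karamata's Tauberian theorem followed by Lemma~3.3 of \cite{MP2004} — the same chain \eqref{a:f}--\eqref{a:U} as in Section~\ref{sec:prelim}, now applied to a discrete measure — turns \eqref{eqD:ics} into
\begin{equation}\label{eqD:Uic}
\breve f(\hat s,0)\ \sim\ \lambda^{-\alpha}\hat s^{\alpha},\qquad \hat s\to 0^{+}.
\end{equation}
Because $s=1-e^{-\hat s}=\hat s\,(1+O(\hat s))$ as $\hat s\to 0^{+}$, \eqref{eqD:Uic} is equivalent to $U(s,0)\sim\lambda^{-\alpha}s^{\alpha}$ as $s\to 0^{+}$, which is precisely the small-$s$ behavior that the Model~C initial condition \eqref{eqC:ics} forces on $\breve F_0$.

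Next, writing $\Phi_\alpha:=\breve F_{\star\alpha}$ for the Bernstein transform of the self-similar profile of Theorem~\ref{t:exist} (so that, in these variables, the self-similar solution is $U_\star(s,t)=\Phi_\alpha(se^{\beta t}/\lambda)$, which at $s=\sigma e^{-\beta t}$ equals $\Phi_\alpha(\sigma/\lambda)$ for all $t$), I would run the proof of Theorem~\ref{t:mainC} on $U$ restricted to $(0,1)$ — legitimate by the locality remark above — to obtain
\begin{equation}\label{eqD:Uconv}
U\!\left(\sigma e^{-\beta t},t\right)\ \longrightarrow\ \Phi_\alpha(\sigma/\lambda),\qquad t\to\infty,
\end{equation}
for every $\sigma\in(0,\infty)$. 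Since $s\mapsto U(s,t)$ is nondecreasing and continuous (inheriting monotonicity from $\breve f(\cdot,t)$), pointwise convergence of the nondecreasing functions $\sigma\mapsto U(\sigma e^{-\beta t},t)$ to the continuous limit $\sigma\mapsto\Phi_\alpha(\sigma/\lambda)$ is automatically locally uniform. Now the rescaled function $x\mapsto\sum_{1\le k\le x e^{\beta t}} f_k(t)$ is the distribution function of the push-forward of $\sum_k f_k(t)\,\delta_k$ under $y\mapsto y e^{-\beta t}$, so its Bernstein transform at $\sigma$ equals $\sum_k(1-e^{-\sigma k e^{-\beta t}})f_k(t)=\breve f(\sigma e^{-\beta t},t)=U\big(1-e^{-\sigma e^{-\beta t}},t\big)$; writing $1-e^{-\sigma e^{-\beta t}}=\sigma e^{-\beta t}\theta_t$ with $\theta_t\to1$, the local uniformity in \eqref{eqD:Uconv} makes this converge to $\Phi_\alpha(\sigma/\lambda)=\int_{\rplus}(1-e^{-\sigma x})\,F_{\star\alpha}(\lambda\,dx)$ for every $\sigma\in(0,\infty)$. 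The total masses converge as well: with $m_0(t):=\sum_k f_k(t)$, taking the test sequence $\varphi_i\equiv1$ in the weak form \eqref{eq:CF2_disc} gives $\D_t m_0\le m_0-m_0^{2}$, so $\limsup_t m_0(t)\le1$, while $m_0(t)\ge\breve f(\sigma e^{-\beta t},t)$ combined with $\Phi_\alpha(\sigma/\lambda)\to1$ as $\sigma\to\infty$ forces $\liminf_t m_0(t)\ge1$, hence $m_0(t)\to1=F_{\star\alpha}(\infty)$. Convergence of the Bernstein transforms at every $\sigma\in(0,\infty)$ together with convergence of total masses is equivalent to narrow convergence of the rescaled measures; since $F_{\star\alpha}$ has a density its distribution function is continuous on $[0,\infty]$, so the distribution functions converge at every $x\in[0,\infty]$, which is \eqref{te:limitD}.

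The step I expect to be the main obstacle is the very first one: making it fully rigorous that the Model~D dynamics in the $s$-variable on the bounded interval $(0,1)$ is genuinely indistinguishable, near the origin, from Model~C dynamics — in particular, checking that no estimate in the proof of Theorem~\ref{t:mainC} secretly relies on the dilation invariance \eqref{scaleC:F} of Model~C or on the behavior of $U$ at large $s$ in a way unavailable on $(0,1)$. Once that is in hand, the Tauberian translation of the hypothesis and the concluding continuity-theorem argument (including the harmless reparametrization $s=1-e^{-\hat s}$) are routine.
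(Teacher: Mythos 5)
Your proposal is correct and follows essentially the same route as the paper's proof: Tauberian translation of \eqref{eqD:ics} into $U(s,0)\sim\lambda^{-\alpha}s^{\alpha}$ under the change of variables $s=1-e^{-\hat s}$, transfer of the Model~C convergence machinery to the Model~D transform, control of the reparametrization error $1-e^{-\hat s e^{-\beta t}}=\hat s e^{-\beta t}(1+O(e^{-\beta t}))$ via uniform convergence on bounded sets, the mass limit $m_0(f(t))\to1$ from the logistic differential inequality together with the lower bound $m_0\ge U(Se^{-\beta t},t)$, and finally Proposition~\ref{p:narrowfat}. The obstacle you flag is already resolved by the paper's setup: Proposition~\ref{p:compareU} is stated for solutions of \eqref{eqC:Bernstein} on an arbitrary interval $[0,\bar s)$ (and the comparison principle of Proposition~\ref{p:compare} acts on bounded intervals $[0,S]$), so it applies verbatim to the Model~D transform on $(0,1)$, with the $\lambda$-dependence absorbed by the dilation invariance of equation \eqref{eqD:BTueq} in the $s$-variable.
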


These convergence results relate to the notion of weak convergence
of measures on $(0,\infty)$ sometimes known as narrow convergence. 
Let $\calM_+(0,\infty)$ be the space of nonnegative finite (Radon) 
measures on $(0,\infty)$. Given $F, F_n\in \calM_+(0,\infty)$ for $n\in\N$,
we say $F_n$ converges to $F$ {\it narrowly} and write
$F_n \nto F$ if 
\[
\int_{(0,\infty)} g(x)\,F_n(dx) \to 
\int_{(0,\infty)} g(x)\,F(dx)
\]
for all functions $g\in C_b(0,\infty)$, the space of bounded continuous
functions on $(0,\infty)$.  The convergence statements \eqref{te:limitC}
and \eqref{te:limitD} correspond to the statement that
\[
\hat F_t(dx) \nto F_{\star\alpha}(\lambda\,dx),\quad t\to\infty
\]
where, respectively, 
\begin{equation}
\hat F_t(dx) = \begin{cases}
F_t(e^{\beta t}dx) & \mbox{for Model C,}\\
\sum_k f_k(t)\delta_{ke^{-\beta t}}(dx) & \mbox{for Model D.}
\end{cases}
\end{equation}
The proofs of \eqref{te:limitC} and \eqref{te:limitD} 
make use of the following result from \cite{DLP2017} {(cf. \cite[Proposition 3.6]{DLP2017})} that  
characterizes narrow convergence in terms of the Bernstein transform. 
\begin{proposition} \label{p:narrowfat}
Assume $F$, $F_n\in\calM_+(0,\infty)$ for $n\in\N$. 
Then the following are equivalent as $n\to\infty$.
\item[(i)] $F_n$ converges narrowly to $F$, i.e., $F_n\nto F$.
\item[(ii)] The Bernstein transforms $\breve F_n(s) \to \breve F(s)$, for each $s\in[0,\infty]$.
\item[(iii)] The Bernstein transforms $\breve F_n(s) \to \breve F(s)$, uniformly for $s\in(0,\infty)$. 
\end{proposition}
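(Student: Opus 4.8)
\emph{Proof plan.} The plan is to establish the cycle $(iii)\Rightarrow(ii)\Rightarrow(i)\Rightarrow(iii)$ (this is \cite[Prop.~3.6]{DLP2017}), of which $(iii)\Rightarrow(ii)$ is immediate. The organizing device will be to view each $F_n$ and $F$ as a nonnegative Radon measure on the two-point compactification $[0,\infty]$ that charges neither $\{0\}$ nor $\{\infty\}$, and to exploit that for finite $s>0$ the function $\phi_s(x):=1-e^{-sx}$ extends to an element of $C([0,\infty])$ with $\phi_s(0)=0$, $\phi_s(\infty)=1$, so that $\int_{[0,\infty]}\phi_s\,dF_n=\breve F_n(s)$; that $\phi_0\equiv0$; and that $\breve F_n(\infty)$ equals the total mass $F_n((0,\infty))$, by monotone convergence. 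I expect the main obstacle to be the direction $(ii)\Rightarrow(i)$: because $(0,\infty)$ is not compact, one must separately rule out mass of the $F_n$ escaping to $x=0$ and to $x=\infty$, and the only leverage is the evaluation of the Bernstein transforms at the two endpoints $s=0$ and $s=\infty$ --- the former controlling limiting mass near $x=\infty$, the latter controlling limiting mass near $x=0$ via conservation of total mass. Everything else is soft functional analysis together with the uniqueness of the L\'evy measure of a Bernstein transform recorded after \qref{def:Btransform}.

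For $(ii)\Rightarrow(i)$ I would proceed as follows. From $\breve F_n(\infty)\to\breve F(\infty)$ the total masses $F_n((0,\infty))$ are bounded, so by weak-$*$ sequential compactness of norm-bounded subsets of $\calM_+([0,\infty])$, every subsequence of $(F_n)$ has a further subsequence converging weak-$*$ on $[0,\infty]$ to some $\mu=c_0\delta_0+\nu+c_\infty\delta_\infty$, where $\nu:=\mu|_{(0,\infty)}$. Testing this convergence against $\phi_s$ and using (ii) gives $\breve\nu(s)+c_\infty=\breve F(s)$ for all $s\in(0,\infty)$; letting $s\to0^+$ (dominated convergence) forces $c_\infty=0$, hence $\breve\nu\equiv\breve F$ on $(0,\infty)$ and therefore $\nu=F$ by uniqueness of the L\'evy measure. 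Testing against the constant function $1$ and using (ii) at $s=\infty$ yields $c_0+F((0,\infty))=\mu([0,\infty])=\lim_n F_n((0,\infty))=\lim_n\breve F_n(\infty)=\breve F(\infty)=F((0,\infty))$, so $c_0=0$ and $\mu=F$. Since every subsequential weak-$*$ limit equals $F$, we obtain $F_n\to F$ weak-$*$ on $[0,\infty]$. To upgrade this to narrow convergence on $(0,\infty)$: given $\varepsilon>0$, since $F$ charges neither endpoint choose $0<a<b<\infty$ and $\chi\in C([0,\infty])$ with $0\le\chi\le1$, $\chi\equiv1$ on $[a,b]$, $\operatorname{supp}\chi\subset(0,\infty)$, and $\int(1-\chi)\,dF<\varepsilon$; then $\limsup_n\int(1-\chi)\,dF_n\le\int(1-\chi)\,dF<\varepsilon$, and for $g\in C_b(0,\infty)$ I split $\int g\,dF_n=\int g\chi\,dF_n+\int g(1-\chi)\,dF_n$, the first term converging because $g\chi\in C([0,\infty])$ and the second bounded by $\|g\|_\infty\int(1-\chi)\,dF_n$. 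Letting $\varepsilon\to0$ gives (i).

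For $(i)\Rightarrow(iii)$: narrow convergence applied to the test functions $\phi_s\in C_b(0,\infty)$ for $s\in(0,\infty)$ and to $1\in C_b(0,\infty)$, together with $\breve F_n(0)=0=\breve F(0)$, shows $\breve F_n\to\breve F$ pointwise on all of $[0,\infty]$. Each $\breve F_n$ is nondecreasing, and the limit $\breve F$ is nondecreasing and continuous on the compact interval $[0,\infty]$ (continuity at the endpoints by dominated/monotone convergence). A P\'olya-type argument then delivers uniformity: given $\varepsilon>0$, pick $0=s_0<s_1<\dots<s_k=\infty$ with $\breve F(s_{j+1})-\breve F(s_j)<\varepsilon$; for $s\in[s_j,s_{j+1}]$ monotonicity sandwiches both $\breve F_n(s)$ and $\breve F(s)$ between the node values, so $\sup_{s>0}|\breve F_n(s)-\breve F(s)|\le\max_{0\le j\le k}|\breve F_n(s_j)-\breve F(s_j)|+\varepsilon$, and the finite maximum tends to $0$ as $n\to\infty$. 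This proves (iii) and closes the cycle.
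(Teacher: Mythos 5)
Your proof is correct. Note that the paper itself does not prove this proposition --- it is quoted from \cite[Prop.~3.6]{DLP2017} --- and your argument is essentially the standard one underlying that reference: weak-$*$ compactness on the compactified half-line $[0,\infty]$ with the endpoint values $\breve F_n(0)=0$ and $\breve F_n(\infty)=F_n((0,\infty))$ ruling out escape of mass, uniqueness of the measure from its Bernstein transform, and a P\'olya-type monotonicity argument to upgrade pointwise to uniform convergence. All the delicate points (finiteness of the limit measure, identification of the atoms at $0$ and $\infty$, continuity of $\breve F$ at both endpoints) are handled correctly, so nothing further is needed.
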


The proofs of our main results will proceed in stages as follows.
In section~\ref{s:ss} we identify the family of relevant 
self-similar solutions of equation~\eqref{eqC:Bernstein}.
The argument involves a phase plane analysis that does not yet establish 
that the profile function is actually a Bernstein function. 
In section~\ref{s:compare} we prove a comparison principle for the
nonlocal evolution equation \eqref{eqC:Bernstein}, then use this in
section~\ref{s:limits} to show that solutions of \eqref{eqC:Bernstein}
with initial data $U_0(s)\sim s^\alpha$ approach the corresponding 
self-similar form found in section~\ref{s:ss}.  
From this we deduce the self-similar profiles are limits of complete Bernstein
functions, hence they are Bernstein transforms themselves
of measures $F_{\star\alpha}$ having completely monotone densities, 
and the results of Theorems~\ref{t:mainC} and \ref{t:mainD} follow.  
The remaining properties of the profiles stated in Theorem~\ref{t:exist},
including complete monotonicity of densities and asymptotics for small
and large size, are established in sections~\ref{s:Pick} and \ref{s:Tauber}.

The results of Theorems~\ref{t:mainC} and \ref{t:mainD} show
that the long-time behavior of solutions with algebraic tails
depends upon the algebraic rate of decay. We recall that for the 
pure coagulation equation with constant rate kernel 
(corresponding to Model C without fragmentation), 
all domains of attraction for self-similiar solutions with 
algebraic tails were characterized in \cite{MP2004} by the condition
that initial data are regularly varying. Here
in Theorem~\ref{t:mainC}, for example, this would correspond
to the condition that the initial data satisfy
\[
\int_0^x y F_0(dy) \sim x^{1-\alpha} L(x)
\]
where $L$ is slowly varying at $\infty$.  
In the present context, however,
we do not know whether this more general condition 
is either sufficient or necessary for convergence to self-similar form.

\section{Self-similar scaling---necessary conditions}\label{s:ss}

We begin our analysis by finding the necessary forms
for any self-similar solution to equation \eqref{eqC:Bernstein}
that governs the Bernstein transform of solutions to Model C.

We look for self-similar solutions to \eqref{eqC:Bernstein} of the form
\[
\uu(s,t) = u(sX(t)),
\]
where $X(\cdot)$ is smooth with $X(t)\to\infty$ as $t\to\infty$.
Because in general $\uu(\infty,t)=m_0(t)\to1$ as $t\to\infty$, we require
$u(\infty)=1$. After substituting into \eqref{eqC:Bernstein}, we find that
for nontrivial solutions we must have 
\[
\beta :=X'(t)/X(t)
\] to be a positive constant independent
of $t$, and $u(z)$ must satisfy 
\begin{equation}
\beta z\D_z u + u^2+u = 2\int_0^1 u(zr)\,dr.
\label{eq:SSu1}
\end{equation}
With
\begin{equation}
v(z) = \int_0^1 u(zr)\,dr = \frac1z\int_0^z u(r)\,dr,
\end{equation}
the variables $(v(z),u(z))$ satisfy the ODE system 
\begin{align}
\label{e:ODEu}
\beta z\D_z u &= - u - u^2 + 2v\,,
\\ 
\label{e:ODEv}
z\D_z v &= u-v \,.
\end{align}
Under the change of variables $\tau = \log z$ we have $\D_\tau = z\D_z$ and this
system becomes autonomous. We seek a solution defined for $\tau\in\R$ satisfying
\[ (u,v) \to \begin{cases}(0,0) & \tau\to-\infty,\cr (1,1) & \tau\to+\infty,\end{cases}
\]
with both components increasing in $\tau$.
What is rather straightforward to check, is that the origin $(0,0)$
is a saddle point in the $(v,u)$ phase plane, and the region
\[
R = \{(u,v)\mid 0<\frac12(u+u^2)<v<u\}
\]
is positively invariant and contained in the unit square $[0,1]^2$. 
Inside this region both $u$ and $v$ increase with $\tau$. 
The unstable manifold at $(0,0)$ enters this region and must approach
the stable node $(1,1)$ as $\tau\to\infty$, satisfying $1\le dv/du \le \frac32$ 
asymptotically since the trajectory approaches from inside $R$.

This trajectory provides the following result.

\begin{lemma}\label{lem:solbeta}
Let $\beta>0$. 
Then, up to a dilation in $z$, there is a unique solution 
of \eqref{eq:SSu1} which is positive and increasing 
for $z\in(0,\infty)$ with $u(0)=0$ and $u(\infty)=1$, satisfying
\begin{align*}  
u(z)&\sim z^\alpha \qquad\mbox{as $z\to 0^+$},\\
1-u(z)&\sim \hat c z^{-\hat\alpha}
\quad\mbox{as $z\to \infty$},
\end{align*}
where $\alpha\in(0,1)$, $\hat\alpha\in(0,\frac13)$ are 
determined by the relations
\begin{equation}\label{e:eigsa}
\beta = \frac{1-\alpha}{\alpha(1+\alpha)}  
= \frac{1-3\hat\alpha}{\hat\alpha(1-\hat\alpha)}
\end{equation}
\end{lemma}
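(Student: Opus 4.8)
The plan is to carry out a phase-plane analysis of the autonomous system obtained from \eqref{e:ODEu}--\eqref{e:ODEv} under $\tau=\log z$, namely
\begin{align}
\beta\,\D_\tau u &= -u - u^2 + 2v,\nonumber\\
\D_\tau v &= u - v,\nonumber
\end{align}
and to identify the desired solution with the unique trajectory on the unstable manifold of the saddle $(0,0)$. First I would verify that $(0,0)$ and $(1,1)$ are the only equilibria in $[0,1]^2$ (setting $u=v$ forces $u+u^2=2u$), linearize at each, and show $(0,0)$ is a saddle while $(1,1)$ is a stable node. The eigenvalues at $(0,0)$ come from the matrix $\begin{pmatrix}-1/\beta & 2/\beta\\ 1 & -1\end{pmatrix}$; the positive eigenvalue $\mu_+$ gives the growth rate of the unstable manifold, and matching $u\sim e^{\mu_+\tau}=z^{\mu_+}$ yields $\alpha=\mu_+$. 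Solving the characteristic equation $\mu^2+(1+1/\beta)\mu+(1-2/\beta)/\beta=0$... more cleanly, plug $v\sim u\cdot(\text{const})$ with $u\sim z^\alpha$ directly into the ODEs: from \eqref{e:ODEv}, $v\sim u/(1+\alpha)$ to leading order, and substituting into \eqref{e:ODEu} with $u^2$ negligible gives $\beta\alpha = -1 + 2/(1+\alpha)$, i.e. $\beta = \frac{1-\alpha}{\alpha(1+\alpha)}$, which is strictly decreasing from $+\infty$ to $0$ as $\alpha$ runs over $(0,1)$, hence bijective. Similarly at $(1,1)$, writing $u=1-p$, $v=1-q$ with $p\sim\hat c z^{-\hat\alpha}$, $q\sim\hat c z^{-\hat\alpha}/(1-\hat\alpha)$ from \eqref{e:ODEv}, and linearizing \eqref{e:ODEu} (now $u^2\approx 1-2p$) gives $-\beta\hat\alpha = -1 + 2p\cdot\text{stuff}$... carefully: $\beta\,\D_\tau u = -u-u^2+2v$ becomes $\beta\hat\alpha p = p + 2p - 2q = 3p - 2q$ to leading order, so $\beta\hat\alpha = 3 - 2/(1-\hat\alpha) = \frac{1-3\hat\alpha}{1-\hat\alpha}$, giving $\beta = \frac{1-3\hat\alpha}{\hat\alpha(1-\hat\alpha)}$; this is decreasing on $(0,1/3)$ from $+\infty$ to $0$, again bijective, which also pins $\hat\alpha\in(0,\tfrac13)$ and establishes \eqref{e:eigsa}.

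Next I would establish existence and uniqueness of the connecting orbit. Existence: the unstable manifold at $(0,0)$ is one-dimensional; I would check that its branch pointing into the open first quadrant enters the region $R=\{0<\tfrac12(u+u^2)<v<u\}$. On the boundary pieces of $R$ one checks the vector field points inward — on $v=u$ we have $\D_\tau v-\D_\tau u$ has a sign forcing entry; on $v=\tfrac12(u+u^2)$ the relation $\beta\D_\tau u=0$ there while $\D_\tau v>0$ shows the trajectory moves to larger $v$ — so $R$ is positively invariant and bounded. Inside $R$ both $u$ and $v$ are strictly increasing in $\tau$ and bounded above by $1$, so the forward orbit converges to an equilibrium in $\overline R\cap[0,1]^2$, which can only be $(1,1)$. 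Thus the unstable-manifold trajectory is a heteroclinic connection with $u,v$ increasing throughout. Uniqueness up to dilation in $z$: any positive increasing solution of \eqref{eq:SSu1} with $u(0^+)=0$, $u(\infty)=1$ corresponds, via $v=\frac1z\int_0^z u$, to a bounded trajectory of the autonomous system defined for all $\tau\in\R$ with $(u,v)\to(0,0)$ as $\tau\to-\infty$; such a backward-bounded orbit must lie on the unstable manifold of $(0,0)$ (the only equilibrium it can limit to backward, given monotonicity and the geometry), and the unstable manifold is unique, so the orbit is unique up to the time-translation $\tau\mapsto\tau+c$, i.e. up to $z\mapsto e^{-c}z$.

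Finally I would upgrade the eigenvalue-rate heuristics to genuine asymptotics. Near $(0,0)$ the trajectory approaches tangent to the unstable eigenvector, so standard stable/unstable manifold estimates (or a direct ODE argument: set $w=v/u$, show $w\to 1/(1+\alpha)$ and that $\D_\tau\log u\to\alpha$, then integrate) give $u(z)\sim C z^\alpha$ as $z\to0^+$; the dilation freedom is exactly used to normalize $C=1$, which fixes the representative uniquely. Near $(1,1)$, since the node is stable with both eigenvalues real and the trajectory enters generically, $1-u(z)\sim\hat c z^{-\hat\alpha}$ where $-\hat\alpha$ is the eigenvalue of slowest decay and $\hat c>0$ is the resulting (trajectory-dependent, now determined) constant; I would note one must check the trajectory does not enter along the fast eigendirection — this follows because entry along the fast direction is non-generic and here the orbit comes from inside $R$, where the slope $dv/du$ satisfies $1\le dv/du\le\tfrac32$ asymptotically, matching the slow eigenvector. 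The main obstacle I anticipate is precisely this last point together with confirming $\hat c>0$ and $\hat\alpha<\tfrac13$: ruling out the fast-mode approach and the borderline case where the two eigenvalues at $(1,1)$ coincide requires a careful look at the invariant region's geometry near $(1,1)$ and the sign of the subleading correction; everything else is routine planar-ODE bookkeeping.
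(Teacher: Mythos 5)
Your proposal is correct and follows essentially the same route as the paper: the substitution $\tau=\log z$, the saddle at $(0,0)$ with its one-dimensional unstable manifold, the positively invariant region $R=\{0<\tfrac12(u+u^2)<v<u\}$ forcing a monotone heteroclinic connection to the stable node $(1,1)$ (with the fast eigendirection at $(1,1)$ excluded because the orbit approaches from inside $R$, where $1\le dv/du\le\tfrac32$), and the exponents $\alpha,\hat\alpha$ read off from the linearizations, exactly as in Section~\ref{s:ss} and the eigenvalue relations \eqref{e:eigs}--\eqref{e:eigsa}. (The characteristic polynomial you wrote in passing at the origin has an incorrect constant term, but you discard that aside, and your direct substitutions $u\sim z^\alpha$, $v\sim u/(1+\alpha)$ and $1-u\sim\hat c z^{-\hat\alpha}$, $1-v\sim \hat c z^{-\hat\alpha}/(1-\hat\alpha)$ yield precisely \eqref{e:eigsa}.)
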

We note that 
the relations \eqref{e:eigsa} arise from the eigenvalue equations
\begin{equation}\label{e:eigs}
\left|\begin{matrix} -1 -\beta\alpha & 2\cr 1 & -1-\alpha 
\end{matrix}\right| = 0,
\qquad
\left|\begin{matrix} -3 +\beta\hat\alpha & 2\cr 1 & -1+\hat\alpha 
\end{matrix}\right| = 0\,.
\end{equation}

In what follows we let $u_\alpha$ denote the solution described by this lemma, 
noting that the relation between $\beta$ and $\alpha$
is monotone and given by \eqref{e:beta}.
The phase-plane argument above does not show that $u_\alpha$
is a Bernstein function, however.
Our plan is to show that in fact $u_\alpha$ is a complete Bernstein function (a Pick function),
by showing that it arises as the pointwise limit of rescaled solutions of
\eqref{eqC:Bernstein} which are complete Bernstein functions.  
Thus, our proof of the existence theorem \ref{t:exist}
will depend upon a proof of stability.

\section{Comparison principle}\label{s:compare}

Our next goal is to study the long-time dynamics 
of solutions of \eqref{eqC:Bernstein} with appropriate initial data.
For this purpose we develop a comparison principle 
showing that solutions of \eqref{eqC:Bernstein} preserve 
the ordering of the initial data on any interval of the form $[0,S]$.

Given $S>0$ and $u\in  C([0,S])$, 
{define an averaging operator $\calA$ by}
\begin{equation}\label{d:Av}
(\calA u)(s) = \int_0^1 u(sr)\,dr \,,\quad s\in[0,S].
\end{equation}
Then clearly $\calA $ is a linear contraction on $C([0,S])$, with 
\begin{equation}\label{e:A0}
(\calA u)(0)=u(0). 
\end{equation}
We recall that by Hardy's inequality,
\begin{equation}\label{e:hardy}
\left(\int_0^S |(\calA u)(s)|^2\,ds \right)^{1/2}
\le 2 \left(\int_0^S |u(s)|^2\,ds\right)^{1/2}.
\end{equation}
{
Indeed, due to Minkowski's inequality in integral form we have
\[
 \left(\int_0^S \left| \int_0^1 u(xr)\,dr\right|^2\,dx\right)^{1/2} \leq \int_0^1 \left(\int_0^S |u(sr)|^2\,ds \right)^{1/2}\,dr
\]
and thus 
\begin{align*}
 \left(\int_0^S |(\calA u)(s)|^2\,ds \right)^{1/2}&
 \leq \int_0^1 \left(\int_0^S |u(sr)|^2\,ds \right)^{1/2}\,dr \\
&\le \int_0^1 \frac{dr}{r^{1/2}} 
\left[\int_0^S |u(s)|^2\,ds \right]^{1/2}.
\end{align*}
}

\begin{proposition}\label{p:compare} 
Given $S, T>0$ suppose that $U, V\in C^1([0,T],C([0,S])$ 
have the following properties: 
\begin{itemize}
\item[(i)] $U(s,0)\ge V(s,0)$ for all $s\in[0,S]$,
\item[(ii)] for all $(s,t)\in[0,S]\times[0,T]$ the equations
\begin{eqnarray}
\D_t U + U^2 + U(s,t) = 2 \calA U + F \,,
\label{e:UF}\\
\D_t V + V^2 + V(s,t) = 2 \calA V + G \,,
\label{e:VG}\end{eqnarray}
hold, where $F\ge G$. 
\end{itemize}
Then $U\ge V$ everywhere in $[0,S]\times[0,T]$. 
 \end{proposition}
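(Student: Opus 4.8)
\medskip
\noindent\emph{Proof strategy.}\quad
The plan is to run an $L^2$ estimate on the negative part of the difference $W:=U-V$, exploiting two structural properties of the averaging operator $\calA$: it preserves positivity, and it is bounded on $L^2(0,S)$ through Hardy's inequality \eqref{e:hardy}.

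First I would subtract \eqref{e:VG} from \eqref{e:UF}. Setting $H:=F-G\ge 0$ and using $U^2-V^2=(U+V)W$, the difference solves the linear nonlocal equation
\[
\D_t W + c\,W = 2\,\calA W + H, \qquad c:=1+U+V ,
\]
on $[0,S]\times[0,T]$, with $W(\cdot,0)\ge 0$ by hypothesis (i). Since $U,V$ are continuous on the compact set $[0,S]\times[0,T]$, the coefficient $c$ is bounded there, say $|c|\le M$. Next I introduce $\phi:=\max(-W,0)\ge 0$, so that $\phi(\cdot,0)\equiv 0$. The functional $\xi\mapsto\|\max(-\xi,0)\|_{L^2(0,S)}^2$ is continuously Fr\'echet differentiable on $L^2(0,S)$ (its gradient $-2\max(-\xi,0)$ is $1$-Lipschitz in $\xi$), so composing with the $C^1$ curve $t\mapsto W(\cdot,t)\in C([0,S])\hookrightarrow L^2(0,S)$ shows $t\mapsto\|\phi(\cdot,t)\|_{L^2}^2$ is $C^1$ with
\[
\frac{d}{dt}\int_0^S\phi^2\,ds = -2\int_0^S \phi\,\D_t W\,ds = 2\int_{\{W<0\}} W\,\D_t W\,ds
= \int_{\{W<0\}}\bigl(-2cW^2 + 4W\calA W + 2WH\bigr)\,ds .
\]

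I would then estimate the three terms on the right-hand side. The reaction term obeys $-2cW^2\le 2MW^2$ pointwise, contributing at most $2M\|\phi\|_{L^2}^2$. The source term gives $\int_{\{W<0\}}WH\le 0$ since $W<0$ and $H\ge 0$ there. For the nonlocal term, write $\calA W=\calA W_+-\calA\phi$ with $W_+:=\max(W,0)$; then on $\{W<0\}$ one has $W\calA W=-\phi\,\calA W_+ + \phi\,\calA\phi$, where $\phi\,\calA W_+\ge 0$ because $\calA$ sends nonnegative functions to nonnegative functions, and $\int_{\{W<0\}}\phi\,\calA\phi\le\int_0^S\phi\,\calA\phi\le\|\phi\|_{L^2}\|\calA\phi\|_{L^2}\le 2\|\phi\|_{L^2}^2$ by \eqref{e:hardy}, so the nonlocal term contributes at most $8\|\phi\|_{L^2}^2$. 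Adding up, $\frac{d}{dt}\|\phi(\cdot,t)\|_{L^2}^2\le(2M+8)\|\phi(\cdot,t)\|_{L^2}^2$; since $\|\phi(\cdot,0)\|_{L^2}=0$, Gronwall's inequality forces $\phi\equiv 0$ on $[0,S]\times[0,T]$, i.e.\ $U\ge V$ there.

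The main obstacle is the nonlocal term $\calA W$: unlike the pointwise reaction and source terms it cannot be disposed of by a sign consideration alone, and it is exactly here that both structural facts about $\calA$ are used — positivity preservation lets one discard the $\phi\,\calA W_+$ contribution, while the Hardy bound \eqref{e:hardy} lets one absorb $\phi\,\calA\phi$ into the Gronwall term. The only genuine technicality, differentiability of $t\mapsto\|\phi(\cdot,t)\|_{L^2}^2$, is handled by the $C^1$-functional remark above (equivalently, by a smooth convex regularization of the negative part). I would also note an alternative route that avoids the energy estimate: apply Duhamel's formula for the linear equation above on a short interval $[0,\delta]$ with $\delta$ depending only on $M$; the resulting Volterra fixed-point map is a contraction on $C([0,S]\times[0,\delta])$ that leaves invariant the closed cone of nonnegative functions — again because $\calA$ preserves positivity — so its unique fixed point $W$ is nonnegative there, and one iterates over $[\delta,2\delta],[2\delta,3\delta],\dots$ to reach $T$.
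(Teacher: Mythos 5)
Your proposal is correct and follows essentially the same route as the paper: an $L^2$ energy estimate for the negative part of $U-V$, discarding the $\calA W_+$ contribution by positivity of $\calA$, absorbing the $\calA\phi$ term via Hardy's inequality \eqref{e:hardy}, and closing with Gronwall from zero initial data. The only difference is a technical one—you justify differentiating $t\mapsto\|\phi(\cdot,t)\|_{L^2}^2$ via the $C^1$ Fr\'echet differentiability of the negative-part functional, where the paper invokes the weak chain rule of \cite[Lemma~7.6]{GT}—which is an acceptable substitute.
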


\begin{proof}
We write 
\[
w=U-V = w_+-w_- \quad\mbox{ where $w_+$, $w_-\ge0$.}
\]
Let $M\ge \max |U+V|$.  Subtracting \eqref{e:VG} from \eqref{e:UF} we find
\[
\D_t w + M |w| + w\ge 2\calA w + F-G \,.
\]
Because $w_\pm$ is Lipschitz in $t$, $w_+w_-=0$, and $\calA w_\pm\ge0$,
we can multiply by $-2w_- \le0$ and invoke \cite[Lemma~7.6]{GT}
to infer that the weak derivative
\[
\D_t(w_-^2) -2M w_-^2 \le 4 w_- \calA w_- \,.
\]
Integrating over $s\in[0,S]$ and using Hardy's inequality we find
\[
\D_t \int_0^S w_-(s)^2\,ds \le (8+2M)\int_0^S w_-(s)^2\,ds.
\]
Because $w_-(s,0)=0$, 
integrating in $t$ and using Gronwall's lemma concludes the proof
that $U\ge V$ in $[0,S]\times[0,T]$.
\end{proof}

\section{Convergence to equilibrium for initial data with power-law tails}
\label{s:limits}

We begin with a result for solutions of \eqref{eqC:Bernstein} 
that is suitable for use in treating both Model C and Model D.
\begin{proposition}\label{p:compareU}
Suppose $U(s,t)$ is any $C^1$ solution of \eqref{eqC:Bernstein} 
for $s\in[0,\bar s)$, $t\in[0,\infty)$,
and assume that its initial data satisfies
\begin{equation}\label{a:U0}
U_0(s) \sim s^\alpha \quad\mbox{as $s\to0^+$,}
\end{equation}
where $\alpha\in(0,1)$.  Then with $\beta$ given by \eqref{e:beta},
we have 
\begin{equation}
U(se^{-\beta t},t) \to u_\alpha(s) \quad\mbox{as $t\to\infty$, for all $s\in(0,\infty)$,}
\end{equation}
with uniform convergence for $s$ in any bounded subset of $(0,\infty)$,
where $u_\alpha$ is the self-similar profile $u$ described in Lemma~\ref{lem:solbeta}.
\end{proposition}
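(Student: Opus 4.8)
The plan is to sandwich the solution $U(\cdot,t)$ between two self-similar sub- and super-solutions built from the profile $u_\alpha$, and then let $t\to\infty$ so that the envelope collapses onto $u_\alpha$. First I would fix a small $S>0$ and work on the interval $[0,S]$, where the comparison principle of Proposition~\ref{p:compare} applies. Because $U_0(s)\sim s^\alpha$ and $u_\alpha(z)\sim z^\alpha$ as $z\to 0^+$, for every $\eps>0$ there are constants so that, after a dilation in $z$, the rescaled profile $u_\alpha$ under- and over-estimates $U_0$ near the origin: more precisely $u_\alpha(\lambda_-(\eps)\,s)\le U_0(s)\le u_\alpha(\lambda_+(\eps)\,s)$ on $[0,S']$ for a suitable $S'\le S$, with $\lambda_\pm(\eps)\to 1$ as $\eps\to 0$. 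The crucial point is that a \emph{dilated} self-similar profile, namely $V^\pm(s,t):=u_\alpha(\lambda_\pm e^{\beta t}s)$, is \emph{not} an exact solution of \eqref{eqC:Bernstein} but is an exact sub- resp.\ super-solution on $[0,S']$: one computes that the defect $F-G$ in Proposition~\ref{p:compare} has a sign because $u_\alpha$ solves \eqref{eq:SSu1} and dilation only rescales $z$.

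Next I would run the comparison principle on $[0,S']\times[0,T]$ for every $T>0$ to conclude
\[
u_\alpha(\lambda_-e^{\beta t}s)\ \le\ U(s,t)\ \le\ u_\alpha(\lambda_+ e^{\beta t}s),\qquad s\in[0,S'],\ t\ge0.
\]
Substituting $s\mapsto s e^{-\beta t}$ gives $u_\alpha(\lambda_- s)\le U(se^{-\beta t},t)\le u_\alpha(\lambda_+ s)$ for all $s$ with $se^{-\beta t}\le S'$, i.e.\ eventually for every fixed $s\in(0,\infty)$. Since $u_\alpha$ is continuous and increasing and $\lambda_\pm(\eps)\to1$, letting first $t\to\infty$ and then $\eps\to0$ pins down $\lim_{t\to\infty}U(se^{-\beta t},t)=u_\alpha(s)$ pointwise; monotonicity of $u_\alpha$ together with Dini's theorem upgrades this to uniform convergence on bounded subsets of $(0,\infty)$. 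One must also check that the bound propagates to all of $s\in(0,\infty)$, not merely $[0,S']$: because $U(\cdot,t)$ and $u_\alpha$ are both nondecreasing in $s$ and $U(\infty,t)=m_0(t)\to1=u_\alpha(\infty)$ by \eqref{e:m0}, the ordering on an initial interval plus the common limit at $+\infty$ forces the squeeze globally.

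The main obstacle is establishing the sub/super-solution property of the dilated profiles \emph{rigorously up to $s=0$}, where \eqref{eqC:Bernstein} is nonlocal through $\calA$ and the profile is only $\sim s^\alpha$, hence not $C^1$ at $0$ in general. I expect to handle this by noting $\calA$ preserves order and $(\calA u)(0)=u(0)$, so the nonlocal term does not spoil the comparison at the endpoint, and by choosing the comparison functions to be genuine $C^1$ perturbations of $u_\alpha$ on $[0,S']$ — e.g.\ $u_\alpha(\lambda_\pm e^{\beta t}s)\pm\delta$ for a small constant $\delta=\delta(\eps)$ — absorbing the error from the inexact matching of the $s^\alpha$ asymptotics into the constant $F-G$. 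A secondary technical point is verifying $U\in C^1([0,T],C([0,S']))$ so that Proposition~\ref{p:compare} literally applies; this should follow from the well-posedness theory for \eqref{eqC:Bernstein} already invoked for Model~C, restricted to the bounded interval.
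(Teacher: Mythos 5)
Your overall strategy coincides with the paper's: sandwich $U$ between dilates of the self-similar profile on a small interval $[0,S_0]$ using the asymptotics $U_0(s)\sim s^\alpha$ and $u_\alpha(z)\sim z^\alpha$, apply the comparison principle of Proposition~\ref{p:compare}, rescale $s\mapsto se^{-\beta t}$, and let the dilation constants tend to $1$. However, your central claim about the comparison functions is incorrect: $V^{\pm}(s,t)=u_\alpha(\lambda_{\pm}e^{\beta t}s)$ is an \emph{exact} solution of \eqref{eqC:Bernstein}, not merely a sub- or super-solution with a signed defect. Writing $z=\lambda_{\pm}se^{\beta t}$ one has $\D_t V^{\pm}=\beta z\,u_\alpha'(z)$, and since the averaging operator commutes with dilations, $(\calA V^{\pm})(s,t)=\int_0^1 u_\alpha(zr)\,dr$; hence \eqref{eq:SSu1} gives $\D_t V^{\pm}+(V^{\pm})^2+V^{\pm}=2\calA V^{\pm}$ identically, i.e.\ $G=0$ in \eqref{e:VG}. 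Your assertion that the defect ``has a sign because dilation only rescales $z$'' is unsubstantiated as stated (it is in fact zero), and the whole discussion of a defect is unnecessary.

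More seriously, the patch you propose for the perceived difficulty would fail: the function $u_\alpha(\lambda e^{\beta t}s)+\delta$ has defect $\delta(2u_\alpha+\delta-1)$ relative to \eqref{eqC:Bernstein}, which changes sign as $u_\alpha$ runs from $0$ to $1$, so it is neither a sub- nor a super-solution, and the ``absorption'' of errors into $F-G$ does not work as described. The worry motivating it is also misplaced: Proposition~\ref{p:compare} requires only $C^1$ dependence on $t$ with values in $C([0,S])$, not differentiability in $s$, and $\D_t V^{\pm}=-(V^{\pm})^2-V^{\pm}+2\calA V^{\pm}$ is continuous up to $s=0$, so the $s^\alpha$ behavior at the origin causes no trouble. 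Two further points where the paper's route is cleaner: uniform convergence on $[0,S]$ falls out of the sandwich directly once $S_0e^{\beta t}>S$, provided you choose $c<1<C$ with $u_\alpha(Cz)-u_\alpha(cz)<\eps$ uniformly in $z$ (your appeal to Dini would require monotonicity in $t$ that you have not established); and no separate argument for large $s$ is needed, since the rescaled interval $[0,S_0e^{\beta t}]$ eventually covers any bounded set. Your use of $U(\infty,t)=m_0(t)\to1$ via \eqref{e:m0} is in any case not available at this level of generality, because the proposition concerns an arbitrary $C^1$ solution on $[0,\bar s)$ with possibly finite $\bar s$, as occurs for Model D.
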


The proof is rather different from the 
proof of convergence to equilibrium for initial data with 
finite first moment, in section 7 of \cite{DLP2017}.
In the present case, the behavior of $U(s,t)$ globally in $t$ is 
determined by the local behavior of the initial data $U_0$ near $s=0$.

\begin{proof}
First, let $u_\alpha$ be given by Lemma~\ref{lem:solbeta},
and note that for any $c>0$ the function given by 
\[
V(s,t)=u_\alpha(cse^{\beta t})
\]
is a solution of \eqref{e:VG} with $G=0$. 
Second, it is not difficult to prove that 
\begin{equation}
 u_\alpha(cz)\to u_\alpha(z) \quad\mbox{as $c\to1$, uniformly for $z\in(0,\infty)$}.
\end{equation}

Now, let $S>0$ and let $\eps>0$.  
Choose $c<1<C$ such that
\begin{equation}
u_\alpha(cz)<u_\alpha(z)<u_\alpha(Cz)<u_\alpha(cz)+\eps
\quad\mbox{for all $z\in(0,\infty)$}.
\end{equation}
Due to the hypothesis \eqref{a:U0}, there exists $S_0=S_0(c,C)>0$ such that
\begin{equation}\label{a:cpm}
u_\alpha(cs) \le U(s,0) \le u_\alpha(C s) \qquad\mbox{for all $s\in[0,S_0]$.}
\end{equation}
Invoking the comparison principle in Proposition~\ref{p:compare} we infer that 
\begin{equation}\label{e:comp1}
u_\alpha(cse^{\beta t}) \le U(s,t) \le u_\alpha(C se^{\beta t}) 
\quad\mbox{for all $s\in[0,S_0]$, $t>0$}.
\end{equation}
Replacing $s\in[0,S_0]$ by $s e^{-\beta t}$ with $s\in[0,S_0e^{\beta t}]$, 
this gives
\begin{equation}\label{e:comp2}
u_\alpha(cs) \le U(se^{-\beta t},t) \le u_\alpha(C s) 
\quad\mbox{for all $s\in[0,S_0e^{\beta t}]$, $t>0$.}
\end{equation}
By consequence, whenever $S_0e^{\beta t}>S$ 
it follows that 
\[
|U(se^{-\beta t},t)-u_\alpha(s)|<\eps \quad\mbox{for all $s\in[0,S]$}.
\]
This finishes the proof.
\end{proof}

\begin{proof}[Proof of Theorem~\ref{t:mainC}]
Because of the dilation invariance of Model C, 
we may assume the initial data satisfies \eqref{eqC:ics}
with $\lambda=1$. By the discussion of \eqref{a:F}--\eqref{a:U}
we infer that 
\begin{equation}
U_0(s) = \int_0^\infty (1-e^{-sx}) F_0(dx) \sim s^\alpha\,,\quad s\to0.
\end{equation}
Next, we invoke Proposition~\ref{p:compareU} to deduce that 
\begin{equation}
U(se^{-\beta t},t) = \int_0^\infty (1-e^{-sx}) F_t(e^{\beta t}\,dx)
\to u_\alpha(s)
\end{equation}
for all $s\in[0,\infty)$. The limit also holds for $s=\infty$
as a consequence of the logistic equation {\eqref{e:m0}} for
$m_0(t)=U(\infty,t)$.
At this point we use the fact that the pointwise limit $u_\alpha(s)$
of the Bernstein functions $s\mapsto U(se^{-\beta t},t)$
is necessarily Bernstein \cite[Cor.~3.7, p.~20]{Schilling_etal_Bernstein} 
and the facts that
\[
\lim_{s\to0} u_\alpha(s) = 0, \qquad
\lim_{s\to\infty} u_\alpha(s) = 1\,,
\]
to infer the following
(cf.~\cite[Eq.~(3.3)]{DLP2017}). 

\begin{lemma}\label{lem:ualphaBernstein}
For any $\alpha\in(0,1)$, the function $u_\alpha$ described in
Lemma~\ref{lem:solbeta} is the Bernstein transform of a 
probability measure $F_{\star\alpha}$ on $(0,\infty)$, satisfying
\[
u_\alpha(s) = \int_0^\infty(1-e^{-sx})F_{\star\alpha}(dx)
\,,\quad s\in[0,\infty].
\]
\end{lemma}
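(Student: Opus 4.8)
The plan is to extract the lemma directly from the convergence already obtained in the proof of Theorem~\ref{t:mainC}. For each fixed $t>0$ the function $s\mapsto U(s,t)=\breve F_t(s)$ is the Bernstein transform of the finite nonnegative measure $F_t\in\calM_+(0,\infty)$ (well-posedness of Model C, \cite[Thm.~6.1]{DLP2017}), hence a Bernstein function; and since $s\mapsto U(\lambda s,t)$ is the Bernstein transform of the pushforward of $F_t$ under $x\mapsto\lambda x$, the function $s\mapsto U(se^{-\beta t},t)$ is again a Bernstein function, namely $\breve G_t$ with $G_t(dx)=F_t(e^{\beta t}\,dx)$ and total mass $G_t((0,\infty))=U(\infty,t)=m_0(t)$. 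Proposition~\ref{p:compareU}, supplemented by the logistic equation~\eqref{e:m0} to handle $s=\infty$, shows that these Bernstein functions converge pointwise on $[0,\infty]$ to $u_\alpha$.

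First I would invoke the stability of Bernstein functions under pointwise limits, \cite[Cor.~3.7, p.~20]{Schilling_etal_Bernstein}, to conclude that $u_\alpha$ is itself a Bernstein function; by the representation theorem~\eqref{def:Btransform} it therefore has a unique L\'evy triple $(a_0,a_\infty,F_{\star\alpha})$ with
\[
u_\alpha(s)=a_0s+a_\infty+\int_{(0,\infty)}(1-e^{-sx})\,F_{\star\alpha}(dx),\qquad s\in(0,\infty).
\]
Next I would pin down $a_0$ and $a_\infty$ via~\eqref{eq:a0ainfty}: Lemma~\ref{lem:solbeta} supplies the boundary values $u_\alpha(0^+)=0$ and $u_\alpha(\infty)=1$, whence $a_\infty=u_\alpha(0^+)=0$ and $a_0=\lim_{s\to\infty}u_\alpha(s)/s=0$. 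Thus $u_\alpha=\breve F_{\star\alpha}$, which is the claimed identity on $(0,\infty)$; it extends to $s\in[0,\infty]$ because both sides vanish at $s=0$ and, by monotone convergence, both sides equal $F_{\star\alpha}((0,\infty))$ at $s=\infty$. Evaluating at $s=\infty$ then gives $F_{\star\alpha}((0,\infty))=u_\alpha(\infty)=1$, so $F_{\star\alpha}$ is a probability measure on $(0,\infty)$.

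There is no real obstacle beyond bookkeeping; the one point that genuinely uses the preceding analysis is that no mass escapes to $\infty$ and no linear term $a_0 s$ survives in the limit, and this is exactly what the boundary values $u_\alpha(0^+)=0$, $u_\alpha(\infty)=1$ from the phase-plane study in Lemma~\ref{lem:solbeta} guarantee --- which is why those two facts are recorded just before the statement of the lemma. (One could alternatively try to argue through Proposition~\ref{p:narrowfat}, but that result presupposes a candidate limit measure, so the cleanest route is the one above: deduce that $u_\alpha$ is Bernstein, and only afterwards read off $F_{\star\alpha}$ from its L\'evy triple.)
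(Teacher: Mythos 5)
Your argument is correct and follows essentially the same route as the paper: the paper likewise obtains $u_\alpha$ as the pointwise limit (on $[0,\infty]$, with $s=\infty$ handled via the logistic equation \eqref{e:m0}) of the Bernstein functions $s\mapsto U(se^{-\beta t},t)$, invokes \cite[Cor.~3.7]{Schilling_etal_Bernstein} to conclude $u_\alpha$ is Bernstein, and uses the boundary values $u_\alpha(0^+)=0$, $u_\alpha(\infty)=1$ to identify it as the Bernstein transform of a probability measure. Your explicit bookkeeping of the L\'evy triple via \eqref{eq:a0ainfty} simply spells out what the paper leaves implicit.
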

Finally, we use Proposition~\ref{p:narrowfat} to 
infer the narrow convergence result
\begin{equation}\label{e:narrowC}
F_t(e^{\beta t}\,dx) \nto F_{\star\alpha}(dx)  \,,\quad t\to\infty,
\end{equation}
to conclude the proof of Theorem~\ref{t:mainC}, 
\end{proof}

\begin{proof}[Proof of Theorem~\ref{t:mainD}]
For Model D, the discussion of \eqref{a:F}--\eqref{a:U}
implies that the hypothesis \eqref{eqD:ics} on initial data
is equivalent to the condition 
\begin{equation}
\breve f(\hat s,0) \sim \lambda^{-\alpha}\hat s^\alpha\,, \quad\hat s\to0,
\end{equation}
on the Bernstein transform of the initial data.
Under the change of variables $s=1-e^{-\hat s}$ in \eqref{eqD:change}
this is evidently equivalent to 
\begin{equation}
U(s,0)\sim \lambda^{-\alpha}s^\alpha\,,\quad s\to0.
\end{equation}
As $U(s,t)$ is a solution of the dilation-invariant equation 
\eqref{eqD:BTueq}, so is the function $\hat U(s,t)=U(\lambda s,t)$
which satisfies $\hat U(s,0)\sim s^{\alpha}$, $s\to0$.
Invoking Proposition~\ref{p:compareU}, we deduce that
for all $s\in[0,\infty)$,
\begin{equation}\label{eqD:lim}
U(se^{-\beta t},t) \to u_\alpha(s/\lambda) \qquad\mbox{as $t\to\infty$}.
\end{equation}
Note that the left-hand side is well-defined only for $e^{\beta t}>s$.

We can now write
\begin{equation}
\breve f(\hat se^{-\beta t},t) = U(\bar s(\hat s,t)e^{-\beta t},t),
\end{equation}
where
$\bar s(\hat s,t) e^{-\beta t} = 1-\exp(-\hat s e^{-\beta t})$.
Then for any fixed $\hat s\in(0,\infty)$,
\begin{equation}
\bar s(\hat s,t)=\hat s+O(e^{-\beta t})
\quad\mbox{as $t\to\infty$.}
\end{equation}
Because the convergence in \eqref{eqD:lim} is uniform
for $s$ in bounded sets by Proposition~\ref{p:compareU},
it follows that for each $\hat s\in[0,\infty)$, 
\begin{equation}\label{eqD:flim}
\breve f(\hat se^{-\beta t},t)\to u_\alpha(\hat s/\lambda).
\end{equation}

Next we establish \eqref{eqD:flim} for $\hat s=\infty$,
recalling $\breve f(\infty,t)=m_0(f(t))$.
In the present case of Model D, the evolution equation
for $m_0(f(t))$ is not closed, and we formulate our
result as follows.

\begin{lemma}\label{l:m0limD} For any solution of Model D, 
$m_0(f(t)) \to 1$ as $t\to\infty$.
\end{lemma}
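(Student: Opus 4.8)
The plan is to bootstrap from the known behavior of the cumulative mass, which we have just controlled in establishing \eqref{eqD:flim}, together with conservation-type bounds for Model~D. First I would recall that for Model~D the zeroth moment $m_0(f(t))=\breve f(\infty,t)=U(1,t)$ (since $s=1-e^{-\hat s}\to1$ as $\hat s\to\infty$), so the claim is equivalent to $U(1,t)\to1$. The natural obstruction is that, unlike in Model~C, the evolution of $m_0$ is not closed: from \eqref{eq:CF2_disc} with $\varphi_i\equiv1$ one gets $\frac{d}{dt}m_0(f(t)) = \tfrac12 m_0^2\cdot(-1) + \text{(fragmentation gain)}$, and the fragmentation gain term depends on the full distribution through $\sum_i (i-1) f_i/(i+1)$ or similar, which is not expressible in $m_0$ alone. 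So a direct ODE argument fails and one must instead sandwich $U(1,t)$.

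The key steps, in order: (1) Show $m_0(f(t))$ stays bounded away from $0$ and $\infty$ for $t$ large --- in fact I would aim to show $\liminf_t m_0(f(t))\ge 1$ directly from the comparison/convergence already in hand: for any fixed large $\hat s<\infty$, monotonicity of $\breve f(\cdot,t)$ gives $m_0(f(t)) = \breve f(\infty,t)\ge \breve f(\hat s e^{-\beta t} \cdot e^{\beta t},t)$... more usefully, $m_0(f(t))\ge \breve f(\hat s, t)$ for every fixed $\hat s$, and one wants to push $\hat s$ through the scaling. Concretely, from \eqref{e:comp1} applied to the rescaled solution one has lower bounds $U(s,t)\ge u_\alpha(c s e^{\beta t})$ on $[0,S_0]$; evaluating at a value $s=s(t)\to 1^-$ chosen so that $se^{-\beta t}$ type quantities behave correctly, and using $u_\alpha(\infty)=1$, yields $\liminf_t U(1,t)\ge 1$. (2) For the upper bound, derive from the weak form a differential inequality of the form $\frac{d}{dt}m_0 \le -m_0^2 + m_0$ --- this should hold because the fragmentation gain $2\sum_{j} b_{i,j} f_{i+j}$ summed over $i$ telescopes to something bounded by $m_0$ (each cluster fragments at rate $\le 1$ and produces exactly one extra cluster net, using $b_{i,j}=2/(i+j+1)$ and $\sum_{j=1}^{i-1} b_{j,i-j} = 2(i-1)/(i+1)\le 2$). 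A careful accounting of the coagulation loss $-m_0^2$ and the fragmentation contribution bounded above by $+m_0$ gives $\frac{d}{dt}m_0 \le m_0 - m_0^2$, hence $\limsup_t m_0(f(t))\le 1$ by the logistic comparison. (3) Combine (1) and (2): $m_0(f(t))\to 1$.

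The main obstacle I expect is step~(2): getting a clean one-sided differential inequality for $m_0$ requires verifying that the fragmentation gain, after summation over all $i$, is bounded above by $m_0$ (equivalently that the net cluster creation rate from fragmentation is at most $m_0$), which hinges on the precise form $b_{i,j}=\beta/(i+j+1)$ with $\beta=2$ --- the "$+1$" in the denominator (the Model~D modification) is exactly what makes $\sum_{j=1}^{i-1} b_{j,i-j} = 2(i-1)/(i+1) \le 2$ with the correct constant, so the fragmentation terms contribute net $\le m_0$ and not something larger. One must also justify that the test function $\varphi_i\equiv 1$ is admissible in the weak form \eqref{eq:CF2_disc} (boundedness of $(\varphi_i)$ holds trivially) and that all sums converge, using the a priori bounds from the well-posedness theory of \cite{DLP2017}. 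Once the inequality $\frac{d}{dt}m_0\le m_0-m_0^2$ is in hand, the rest is the standard logistic comparison exactly as for \eqref{e:m0}, and the lower bound from step~(1) is essentially a restatement of the convergence already proved in \eqref{eqD:flim} pushed to $\hat s=\infty$ via monotonicity of Bernstein transforms.
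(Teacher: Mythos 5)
Your proposal is correct and takes essentially the same route as the paper: a sandwich in which the upper bound $\limsup_{t\to\infty} m_0(f(t))\le 1$ comes from the one-sided inequality $\D_t m_0 \le -m_0^2+m_0$ and logistic comparison (the paper simply cites \cite[Thm.~12.1]{DLP2017} for this inequality rather than re-deriving it from the weak form with $\varphi_i\equiv1$ as you sketch), while the lower bound $\liminf_{t\to\infty} m_0(f(t))\ge 1$ follows from Proposition~\ref{p:compareU} by evaluating at $s=Se^{-\beta t}$ with $u_\alpha(S)>1-\eps$ and using monotonicity of the Bernstein transform in $s$. Both halves of your argument match the paper's proof in substance.
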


\begin{proof}
1. According to \cite[Thm.~12.1]{DLP2017},
the zeroth moment $m_0(f(t))=\breve f(\infty,t)$ is a smooth function 
of $t\in[0,\infty)$ that satisfies the inequality
\begin{equation}
\D_t m_0(f(t)) \le -m_0(f(t))^2 + m_0(f(t))\,,\quad t\ge0.
\end{equation}
We infer that for all $t\ge0$,
\begin{equation}\label{b:m0} 
m_0(f(t))\le  \frac1{1-e^{-t}}\,,
\end{equation}
as the right-hand size solves the logistic equation $y'=-y^2+y$ on 
$(0,\infty)$. Thus we infer
\begin{equation}\label{b:m0limsup}
\limsup_{t\to\infty} m_0(f(t)) \le 1.
\end{equation}
2. We claim $\liminf_{t\to\infty} m_0(f(t)) \ge 1$. For this we
use the result of Proposition~\ref{p:compareU}, with $U(s,t)$ 
for $0<s<1$ determined
from $\breve f(\hat s,t)$ by \eqref{eqD:change}. 
Choose $S>0$ such that 
$u_\alpha(S)>1-\eps$.
Then for $t$ sufficiently large we have
\[
m_0(f(t)) \ge U(S e^{-\beta t},t) >1-\eps.
\]
Hence $\liminf_{t\to\infty} m_0(f(t)) \ge 1$.
This finishes the proof of the Lemma.
\end{proof}

Now, because \eqref{eqD:flim} holds for all $s\in[0,\infty]$,
the desired conclusion of narrow convergence in Theorem~\ref{t:mainD}
follows by using Proposition~\ref{p:narrowfat}.
\end{proof}

\section{Pick properties of self-similar profiles}
\label{s:Pick}

\begin{lemma}\label{l:cmFa}
 For any $\alpha\in(0,1)$ the measure $F_{\star\alpha}$
of {Lemma } \ref{lem:ualphaBernstein} has a completely monotone density
$f_{\star\alpha}$, whose Bernstein transform 
is the function $u_\alpha$
described in Lemma~\ref{lem:solbeta}, i.e.,
\[
u_\alpha(s) = \int_0^\infty(1-e^{-sx})f_\alpha(x)\,dx, \quad
s\in[0,\infty].
\]
\end{lemma}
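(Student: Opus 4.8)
The plan is to exhibit $u_\alpha$ as a pointwise limit of \emph{complete} Bernstein functions and then read off the conclusion from Theorem~\ref{thm:CBF}. Fix $\alpha\in(0,1)$ and take as initial datum for Model~C the finite measure $F_0(dx)=c_\alpha(1+x)^{-\alpha-1}\,dx$ with $c_\alpha=\alpha/\Gamma(1-\alpha)$. Its density is completely monotone (for instance $(1+x)^{-\alpha-1}$ is the Laplace transform of $t\mapsto t^{\alpha}e^{-t}/\Gamma(1+\alpha)$), so by Theorem~\ref{thm:CBF} the Bernstein transform $U_0:=\breve F_0$ is a complete Bernstein function with $a_0=a_\infty=0$; and since $(1+x)^{-\alpha-1}\sim x^{-\alpha-1}$ as $x\to\infty$, the Tauberian chain \eqref{a:f}--\eqref{a:U} gives $U_0(s)\sim s^\alpha$ as $s\to0^+$. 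Let $F_t$ be the corresponding solution of Model~C from \cite[Thm.~6.1]{DLP2017} and $U(s,t)=\breve F_t(s)$ the associated solution of \eqref{eqC:Bernstein}.

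The crux is the claim that $U(\cdot,t)$ is a complete Bernstein function for every $t\ge0$. I would prove this by a Lie--Trotter splitting of \eqref{eqC:Bernstein}, written in the form $\D_t U=-U^2+(2\calA-I)U$. First, $\calA$ maps complete Bernstein functions to complete Bernstein functions, because $\calA U(s)=\int_0^1 U(sr)\,dr$ is an average of the dilates $s\mapsto U(sr)$, $r\in(0,1]$, each of which is again a Pick function that is nonnegative on $(0,\infty)$; consequently the linear flow $e^{t(2\calA-I)}=e^{-t}\sum_{n\ge0}\tfrac{(2t)^n}{n!}\calA^n$, a norm-convergent positive combination of the iterates $\calA^n$, preserves complete Bernstein functions. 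Second, the Riccati flow $\D_t U=-U^2$ has solution $U(t)=U_0/(1+tU_0)=\psi_t\circ U_0$ with $\psi_t(w)=w/(1+tw)$ a Möbius map preserving the upper half-plane; the composition of a Pick function with such a map is again Pick, and it is manifestly nonnegative on $(0,\infty)$, hence complete Bernstein. Since the cone of complete Bernstein functions is closed under pointwise limits, the Trotter product formula --- which applies because the nonlinearity is locally Lipschitz on $C([0,S])$ and the solution obeys the a priori bound $0\le U(\cdot,t)\le m_0(t)$ --- shows the full flow preserves this cone on every $[0,S]$, hence on $[0,\infty]$.

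Granting the claim, I would apply Proposition~\ref{p:compareU} to $U_0\sim s^\alpha$: for every $s\in(0,\infty)$ one gets $U(se^{-\beta t},t)\to u_\alpha(s)$ as $t\to\infty$, the limit holding trivially at $s=0$ and, via the logistic equation \eqref{e:m0} for $m_0(t)=U(\infty,t)$, at $s=\infty$. Each function $s\mapsto U(se^{-\beta t},t)$ is complete Bernstein, since dilation preserves that class, and $u_\alpha$ is finite ($0\le u_\alpha\le1$); hence the pointwise limit $u_\alpha$ is again a complete Bernstein function. By Theorem~\ref{thm:CBF} the L\'evy measure in the representation \eqref{eq:CBF} of $u_\alpha$ has a completely monotone density $g$, and since $a_0=\lim_{s\to\infty}u_\alpha(s)/s=0$ (because $u_\alpha(\infty)=1$) and $a_\infty=u_\alpha(0^+)=0$, we obtain $u_\alpha(s)=\int_0^\infty(1-e^{-sx})g(x)\,dx$. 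Comparing this with Lemma~\ref{lem:ualphaBernstein} and invoking uniqueness of the L\'evy triple, $F_{\star\alpha}(dx)=f_{\star\alpha}(x)\,dx$ with $f_{\star\alpha}:=g$ completely monotone, which is the assertion.

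The main obstacle is the intermediate claim, namely propagation of the complete Bernstein property --- equivalently, complete monotonicity of the density of $F_t$ --- under the nonlinear flow. The splitting reduces it to the two clean facts above (invariance of complete Bernstein functions under $\calA$, and under composition with the half-plane Möbius maps $\psi_t$), but one still has to justify the Trotter approximation in $C([0,S])$ and verify that the intermediate iterates, which are uniformly bounded on $[0,S]$, remain inside the closed cone of (restrictions of) complete Bernstein functions; alternatively this propagation can be read off from the well-posedness and regularity analysis of Model~C carried out in \cite{DLP2017}.
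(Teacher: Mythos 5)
Your argument is correct and is essentially the paper's proof: the paper likewise chooses Model~C data with a completely monotone density and the tail giving $U_0(s)\sim s^\alpha$, notes that the flow preserves complete monotonicity so each $s\mapsto U(se^{-\beta t},t)$ is a complete Bernstein function, and then obtains $u_\alpha$ as a pointwise limit, concluding via closedness of that cone and the representation theorem, exactly as you do. The only difference is the propagation step: where you sketch a Lie--Trotter splitting (whose convergence would still require justification), the paper simply cites \cite[Thm.~6.1(ii)]{DLP2017}, which was proved there by precisely the kind of implicit--explicit splitting scheme you describe, i.e.\ the fallback you already mention is the paper's actual route.
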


\begin{proof}
By Theorem 6.1(ii) of \cite{DLP2017}, if the initial data $F_0$ for 
Model C has a completely monotone density, then the solution $F_t$ has a 
completely monotone density for every $t\ge0$, 
with $F_t(dx)=f_t(x)\,dx$ where $f_t$ is completely monotone. 
By the representation theorem for complete Bernstein functions, 
this property is equivalent to saying that the Bernstein transform
$U(\cdot,t)=\breve F_t$ is a Pick function.

As dilates and pointwise limits of complete Bernstein functions 
are complete Bernstein functions \cite[Cor.~7.6]{Schilling_etal_Bernstein},
we infer directly from our Theorem \ref{t:mainC}
that for any $\alpha\in(0,1)$, the self-similar profile 
$u_\alpha$ is a complete Bernstein function. 
Therefore, its L\'evy measure $F_{\star\alpha}$ 
has a completely monotone density $f_\alpha$.
\end{proof}

\begin{remark}
An example of Pick-function initial data which satisfy the hypotheses of the convergence theorem is the following:
\begin{equation}
U_0(s) = s^\alpha = 
\frac{\alpha}{\Gamma(1-\alpha)}
\int_0^\infty (1-e^{-sx}) x^{-1-\alpha}\,dx\, 
\label{e:salphaBT}
\end{equation}
\end{remark}

\begin{remark}
We have no argument establishing the monotonicity of densities for model C 
that avoids use of the representation theorem for complete Bernstein functions.
It would be interesting to have such an argument.
\end{remark}

{\it Decomposition.}
A point which is interesting, but not essential to the main 
thrust of our analysis, is that we can sometimes `decompose' the 
Bernstein transforms $U(s,t)=\breve F_t(s)$ of solutions
of Model C, writing
\begin{equation}\label{d:Valp}
U(s,t) = V(s^\alpha,t),
\end{equation}
where $V(\cdot,t)$ itself is a complete Bernstein function.
By consider limits as $t\to\infty$,
this can be used to say something more about the self-similar
profiles $u_\alpha$.
\begin{proposition}\label{p:VaCB}
(a) Suppose $\alpha\in(0,1)$ and $U_0(s)=V_0(s^\alpha)$
where $V_0$ is completely Bernstein. Then for all $t\ge0$, 
\eqref{d:Valp} holds for the solution
of \eqref{eqC:Bernstein} with initial data $U_0$,
where $V(\cdot,t)$ is completely Bernstein.

(b) For each $\alpha\in(0,1)$, the Bernstein transform
$u_\alpha$ of the self-similar profiles of Lemma~\ref{lem:solbeta}
have the form
\begin{equation}\label{e:uaVa}
u_\alpha(s) = V_\alpha(s^\alpha)
\end{equation}
where $V_\alpha$ is completely Bernstein, having the representation
\begin{equation}\label{r:Va}
V_\alpha(s) = \int_0^\infty (1-e^{-sx})g_{\star\alpha}(x)\,dx
\end{equation}
for some completely monotone function $g_{\star\alpha}$.
\end{proposition}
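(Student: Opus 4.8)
The plan is to recast the statement as a question about a variant of \eqref{eqC:Bernstein} through the substitution $\sigma=s^\alpha$, to establish for that variant the same preservation of complete monotonicity of densities that Model~C enjoys, and then to read off part~(b) by passing to the large-time limit in part~(a).

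For part~(a) I would set $V(\sigma,t):=U(\sigma^{1/\alpha},t)$, so that \eqref{d:Valp} holds by construction and $V(\cdot,0)=V_0$. Inserting $U(s,t)=V(s^\alpha,t)$ into \eqref{eqC:Bernstein} and changing variables $r\mapsto\rho=r^\alpha$ in the averaging integral shows that $V$ satisfies
\[
\D_t V = -V^2-V+2\calA_\alpha V,\qquad (\calA_\alpha V)(\sigma)=\frac1\alpha\int_0^1 V(\sigma\rho)\,\rho^{1/\alpha-1}\,d\rho .
\]
The weight $\frac1\alpha\rho^{1/\alpha-1}$ is a probability density on $(0,1)$, so $\calA_\alpha$ is a linear contraction on $C([0,S])$ with $(\calA_\alpha V)(0)=V(0)$, and Minkowski's integral inequality gives the Hardy-type bound $\|\calA_\alpha V\|_{L^2(0,S)}\le\frac2{2-\alpha}\|V\|_{L^2(0,S)}$ since $\frac1\alpha\int_0^1\rho^{1/\alpha-3/2}\,d\rho=\frac2{2-\alpha}<2$. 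Hence the comparison principle of Proposition~\ref{p:compare} transcribes verbatim with $\calA$ replaced by $\calA_\alpha$. The new ingredient needed is that $\calA_\alpha$ maps complete Bernstein functions to complete Bernstein functions: if $V$ is a Pick function, holomorphic and nonnegative on $(0,\infty)$, then so is $\sigma\mapsto V(\sigma\rho)$ for each fixed $\rho\in(0,1)$, and averaging these against the positive measure $\frac1\alpha\rho^{1/\alpha-1}\,d\rho$ preserves nonnegativity of the imaginary part on the upper half plane, so $\calA_\alpha V$ is again complete Bernstein.

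Granting this, I would prove complete-Bernstein preservation for the $V$-equation by the scheme used for Model~C: rewrite it as $\D_t V+V^2+(1+M)V=MV+2\calA_\alpha V$ with $M$ large and discretize semi-implicitly in $t$, so that at each step $V^{n+1}$ is the nonnegative root of the quadratic $\Delta t\,(V^{n+1})^2+bV^{n+1}=c$, where $b=1+(1+M)\Delta t>0$ is a constant and $c=V^n+\Delta t\,(MV^n+2\calA_\alpha V^n)$. If $V^n$ is complete Bernstein then so is $c$ (a nonnegative combination of complete Bernstein functions), hence so is $b^2+4\Delta t\,c$, hence so is $\sqrt{b^2+4\Delta t\,c}$ (composition with the complete Bernstein function $w\mapsto\sqrt w$), and therefore so is $V^{n+1}=\frac1{2\Delta t}\big(\sqrt{b^2+4\Delta t\,c}-b\big)$, because subtracting from a complete Bernstein function a constant no larger than its value at $0^+$ leaves it complete Bernstein and here $\sqrt{b^2+4\Delta t\,c(0^+)}\ge b$. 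Starting from $V^0=V_0$, letting $\Delta t\to0$ with $n\Delta t\to t$ (the passage to the limit being standard given uniform boundedness of $V^n$), and using that pointwise limits of complete Bernstein functions are complete Bernstein \cite[Cor.~7.6]{Schilling_etal_Bernstein}, we conclude that $V(\cdot,t)$ is complete Bernstein for every $t\ge0$, which is part~(a). I expect this verification that the modified, weighted-average evolution preserves complete monotonicity of densities to be the main obstacle; the remaining steps merely transcribe the Model~C arguments.

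For part~(b) I would apply part~(a) with $V_0(\sigma)=\sigma/(1+\sigma)=\int_0^\infty(1-e^{-\sigma x})e^{-x}\,dx$, which is complete Bernstein with $V_0(0^+)=0$, $V_0(\infty)=1$ and $V_0(\sigma)\sim\sigma$ as $\sigma\to0$, so that $U_0(s)=V_0(s^\alpha)\sim s^\alpha$ as $s\to0$. Proposition~\ref{p:compareU} then gives $U(se^{-\beta t},t)\to u_\alpha(s)$ for every $s\in(0,\infty)$; by part~(a) and the substitution $\sigma=s^\alpha$ this reads $V(\sigma e^{-\alpha\beta t},t)\to V_\alpha(\sigma)$ where $V_\alpha(\sigma):=u_\alpha(\sigma^{1/\alpha})$, so $u_\alpha(s)=V_\alpha(s^\alpha)$. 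Each $\sigma\mapsto V(\sigma e^{-\alpha\beta t},t)$ is a dilate of the complete Bernstein function $V(\cdot,t)$, so by \cite[Cor.~7.6]{Schilling_etal_Bernstein} (dilates and pointwise limits of complete Bernstein functions are complete Bernstein) $V_\alpha$ is complete Bernstein. Finally $V_\alpha$ is bounded with $V_\alpha(0^+)=u_\alpha(0^+)=0$ and $V_\alpha(\infty)=u_\alpha(\infty)=1$, so in the representation \eqref{def:Btransform} one has $a_0=\lim_{\sigma\to\infty}V_\alpha(\sigma)/\sigma=0$ and $a_\infty=V_\alpha(0^+)=0$, its Lévy density $g_{\star\alpha}$ is completely monotone because $V_\alpha$ is complete Bernstein, and $\int_0^\infty g_{\star\alpha}=V_\alpha(\infty)=1$; this gives \eqref{r:Va}.
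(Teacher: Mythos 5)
Your proposal is correct and follows essentially the same route as the paper: pass to the $V$-equation with the weighted average $\calA_\alpha$, show the implicit--explicit time-stepping preserves the complete Bernstein property (you verify the quadratic step via the explicit square-root formula where the paper simply cites \cite[Prop.~3.4]{DLP2017}), and deduce (b) from Proposition~\ref{p:compareU} together with closure of complete Bernstein functions under dilation and pointwise limits. The extra relaxation parameter $M$ and the Hardy/comparison discussion for $\calA_\alpha$ are unnecessary here; with $M=0$ your scheme is exactly the scheme of \cite{DLP2017} under the correspondence $U_n(s)=V_n(s^\alpha)$, whose convergence as $\Delta t\to0$ is already known, which is how the paper justifies the limit passage you describe as ``standard''.
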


\begin{proof}
To prove part (a), we define $V(\cdot,t)$ by \eqref{d:Valp} and
compute that 
\begin{gather} \label{e:DtV}
\D_t V(s,t) + V^2 + V = 2 A_\alpha V(s,t), 
\\
A_\alpha V(s,t) = \int_0^1 V(sr,t)\, d(r^{1/\alpha}).
\label{e:Aa}
\end{gather}
The implicit-explicit difference scheme used in 
\cite[Sec.~6]{DLP2017}
to solve \eqref{eqC:Bernstein} corresponds precisely here to
the difference scheme
\begin{equation}
\hat V_n(s) = V_n(s) + 2\Delta t\, A_\alpha V_n(s)\,,
\end{equation}
\begin{equation}
(1+\Delta t)V_{n+1}(s) + \Delta t\, V_{n+1}(s)^2 = \hat V_n(s)
\end{equation}
under the correspondence
\begin{equation}
U_n(s) = V_n(s^\alpha).
\end{equation}
Exactly as argued at the end of \cite[Sec.~6]{DLP2017}, if $V_n$
is completely Bernstein then so is $\hat V_n$ since 
complete Bernstein functions form a convex cone closed under
dilations and taking pointwise limits. Then $V_{n+1}$ is
completely Bernstein due to \cite[Prop.~3.4]{DLP2017}
(i.e., for the same reason $U_{n+1}$ is). 
Because of the fact 
that $U_n(s)\to U(s,t)$ as $\Delta t\to0$ with $n\Delta t\to t$.
which was shown in \cite{DLP2017}, we infer that similarly
$V_n(s)\to V(s,t)$, and hence $V(\cdot,t)$ is completely Bernstein.

Next we prove part (b).
From the convergence result of Proposition~\ref{p:compareU} 
it follows that if $V_0(s)=U_0(s^{1/\alpha})\sim s$ as $s\to0$, 
then for all $s>0$,
\begin{equation}
V(se^{-\alpha\beta t},t)=U(s^{1/\alpha}e^{-\beta t},t)\to V_\alpha(s) \quad\mbox{as $t\to\infty$,} 
\end{equation}
where $V_\alpha$ is defined by \eqref{e:uaVa}.
By taking $V_0$ to be completely Bernstein and applying part (a),
we conclude $V_\alpha$ is completely Bernstein through
taking the pointwise limit.
\end{proof}

\begin{remark}
Formulae such as \eqref{e:uaVa}, involving the composition of
two Bernstein functions, 
are associated with the notion of subordination of probability measures,
as is discussed by Feller \cite[XIII.7]{Feller}.
See section~\ref{s:subord} below for further information.
\end{remark}

\begin{remark}
Equation \eqref{e:DtV} satisfied by $V(s,t)$ is close to one
satisfied by the Bernstein transform
of the solution of a system modeling coagulation with 
multiple-fragmentation \cite{Melzak1957,MLM1997,KKW2011}.
This system takes the following strong form analogous to 
\eqref{eq:CF3_Niwa_11}--\eqref{eq:CF5_Niwa_11}:
\begin{eqnarray}
&&\hspace{-1.5cm}
\D_t f(x,t) = Q_a(f)(x,t) + Q_b(f)(x,t) ,
\label{eq:CmF3_Niwa_11}\\
&&\hspace{-1.5cm}
Q_a(f)(x,t) = \int_0^x \, f(y,t) \, f(x-y,t) \, dy -   2 f(x,t) \, \int_0^\infty f(y,t) \, dy , 
\label{eq:CmF4_Niwa_11} \\
&&\hspace{-1.5cm}
Q_b(f)(x,t) = -f(x,t)  +   \int_x^\infty b(x|y) {f(y,t)} \, dy , 
\label{eq:CmF5_Niwa_11}
\end{eqnarray}
where
\begin{equation}\label{d:newb}
b(x| y) = (\gamma+2)\frac{x^\gamma}{y^{1+\gamma}} \,.
\qquad \gamma = \frac{1-\alpha}{\alpha} \,.
\end{equation}
The coefficient $\gamma+2$ is determined by the requirement that mass is conserved:
\[
1= \frac1y \int_0^y xb(x|y)\,dx =  (\gamma+2)\int_0^1 r^{\gamma+1}\,dr\,.
\]
A key calculation is that with $\vp_s(x)=1-e^{-sx}$,
\begin{align*}
\int_0^y \vp_s(x) b(x|y)\,dx 
 & =  \int_0^y \vp_s(x) (\gamma+2) \left(\frac xy\right)^{\gamma}\,\frac{dx}y
\\ & = \frac{\gamma+2}{\gamma+1}\int_0^1 \vp_s(ry)\,d(r^{\gamma+1})
\\ & = (\alpha+1)\int_0^1 \vp_s(ry)\,d(r^{1/\alpha})
\end{align*}
As a consequence, the Bernstein transform of a solution of
\eqref{eq:CmF3_Niwa_11}--\eqref{eq:CmF5_Niwa_11} should satisfy
\begin{gather} \label{e:DtVm}
\D_t V(s,t) + V^2 + V = (1+\alpha) A_\alpha V(s,t)\,. 
\end{gather}
{The coefficient $(1+\alpha)$ here differs from the factor 2
in \eqref{e:DtV},} and we see no way to scale
the $V$ in \eqref{d:Valp} to get exactly this 
coagulation--multiple-fragmentation model.

A last note is that the `number of clusters' produced from a cluster
of size $y$ by this fragmentation mechanism is calculated to be
\[
n(y) = \int_0^y b(x|y)\,dx = \frac{\gamma+2}{\gamma+1}=\alpha+1.
\]
\end{remark}

\section{Asymptotics of self-similar profiles}
\label{s:Tauber}

Here we complete the proof of Theorem~\ref{t:exist}, characterizing
self-similar solutions of Model C, by 
describing the asymptotic behavior of the self-similar size-distribution
profiles $f_{\star\alpha}$ in the limits of large and small size. 
This involves a Tauberian analysis based on the behavior of the Bernstein
transform $u_\alpha$ as described in Lemma~\ref{lem:solbeta}. 

\begin{proof}[Proof of Theorem~\ref{t:exist}] 
Given $\alpha\in(0,1)$, recall we know 
that for any self-similar solution of Model C as in \eqref{d:Fa},
the measure $F_{\star\alpha}(dx)$ must have Bernstein transform $u_\alpha(s)$
as described by Lemma~\ref{lem:solbeta}.  
That indeed the function $u_\alpha$ is the Bernstein transform
of a probability measure $F_{\star\alpha}$
follows from Lemma~\ref{lem:ualphaBernstein}, and the fact that 
$F_{\alpha\star}$ has a completely monotone density $f_{\star\alpha}$
was shown in Lemma~\ref{l:cmFa}.
It remains only to establish that $f_{\alpha\star}$ enjoys
the asymptotic properties stated in \eqref{e:faasym}.

From Lemma~\ref{lem:solbeta} we infer that as $z\to\infty$,
\[
1-u_\alpha(z) = \int_0^\infty e^{-zx} f_{\star\alpha}(x)\,dx 
\sim \hat c z^{-\hat\alpha}
\quad\mbox{as $z\to\infty$}.
\]
Recalling $\hat\alpha\in(0,\frac13)$, invoking the Tauberian theorem 
\cite[Thm.~XIII.5.3]{Feller} and the fact that $f_{\star\alpha}$ is monotone, 
from \cite[Thm.~XIII.5.4]{Feller} we infer
\begin{equation}\label{e:fa0}
f_{\star\alpha}(x) \sim 
 \frac{\hat c }{\Gamma(\hat\alpha)} x^{\hat\alpha-1}
\quad\mbox{as $x\to0$.}
\end{equation}

Next, from Lemma~\ref{lem:solbeta}, {\eqref{e:ODEu} and \eqref{e:beta}}   we infer that 
\[
\D_z u_\alpha(z) = \int_0^\infty e^{-zx} xf_{\star\alpha}(x)\,dx \sim \alpha z^{\alpha-1} 
\quad\mbox{as $z\to0$}.
\]
By Karamata's Tauberian theorem \cite[Thm.~XIII.5.2]{Feller} we deduce
\[
\int_0^x y f_{\star\alpha}(y)\,dy \sim  \frac{\alpha }{\Gamma(2-\alpha)} x^{1-\alpha}
\quad\mbox{as $x\to\infty$}.
\]
Although we do not know $y\mapsto y f_\alpha(y)$ is eventually monotone,
the selection argument used in the proof of 
\cite[Thm.~XIII.5.4]{Feller}
works without change, allowing us to infer that 
\begin{equation}
x f_{\star\alpha}(x) \sim  \frac{\alpha }{\Gamma(1-\alpha)}x^{-\alpha}
\quad\mbox{as $x\to\infty$}.
\end{equation}
This completes the proof of Theorem~\ref{t:exist}.
\end{proof}

\begin{remark} We note that in the limit $\alpha\to1$ we have $\beta\to0$
and $\hat\alpha\to\frac13$, and the power-law exponent 
$\hat\alpha-1\to-\frac23$. This recovers the exponent governing
the small-size behavior of the equilibrium distribution analyzed 
previously in \cite[Eq.~(1.6)]{DLP2017}.
\end{remark}

\begin{remark} By \eqref{e:uaVa}, 
\[
1-V_\alpha(z) = \int_0^\infty e^{-zx}g_{\star\alpha}(x)\,dx \sim 
\hat c z^{-\hat\alpha/\alpha}\,,
\]
hence by the same argument as that leading to \eqref{e:fa0}
we find
\begin{equation}\label{e:ga0}
g_{\star\alpha}(x) \sim \frac{\hat c}{\Gamma(\hat\alpha/\alpha)} 
x^{-1+\hat\alpha/\alpha}\quad \mbox{as $x\to0.$}
\end{equation}
We note that $\hat\alpha/\alpha<1$ for all $\alpha\in(0,1)$, because
the assumption $\hat\alpha=\alpha$ together with the relations
\eqref{e:eigsa} lead to a contradiction.
\end{remark}

\section{Series in fractional powers}
\label{s:series}

In this section we show that the self-similar profile in Lemma~\ref{lem:solbeta}
is expressed, for small $z>0$, in the form
\begin{equation}\label{e:ustarseries}
u_\alpha(z) = \sum_{n=1}^\infty (-1)^{n-1}c_n z^{\alpha n}\,,
\end{equation}
where the series converges for $z^\alpha\in(0,R_\alpha)$
for some positive but finite number $R_\alpha$, and the coefficient
sequence $\{c_n\}$ is positive with a rather nice structure.

By substituting the series expansion \eqref{e:ustarseries} into \eqref{eq:SSu1}
we find that $c_1=1$, 
and $c_n$ is necessarily determined recursively for $n\ge2$ by 
\begin{equation}\label{e:crecurs}
c_n = \frac1{a_n} \sum_{k=1}^{n-1} c_k c_{n-k}\,, 
\end{equation}
\begin{equation}
a_n = \beta\alpha n + 1 - \frac2{\alpha n+1} = 
\frac{1-\alpha}{1+\alpha}n + \frac{\alpha n-1}{\alpha n+1}.
\end{equation}
Because the relation \eqref{e:beta} implies that indeed
\begin{equation}\label{e:betaalpha2}
\beta\alpha+1 = \frac2{1+\alpha},
\end{equation}
plainly $a_1=0$ and $a_n$ increases with $n$, with $a_n>0$ for $n>1$. 

Recall that we know from Proposition~\ref{p:VaCB}
that $u_\alpha(s)=V_\alpha(s^\alpha)$ where
$V_\alpha$ is completely Bernstein.

\begin{proposition}
For each $\alpha\in(0,1)$, $V_\alpha$ is analytic in a neighborhood of 
$s=0$, given by the power series 
\[
V_\alpha(s) = \sum_{n=1}^\infty (-1)^{n-1}c_n s^{n}\,.
\]
This series has a positive radius of convergence $R_\alpha$ satisfying
\begin{equation}\label{e:Rbds}
\frac{1-\alpha}{1+\alpha} \le R_\alpha \le  a_2 <1,
\end{equation}
and coefficients that take the form
\begin{equation}\label{e:cng}
c_n = \gamma^\star_{n-1} R_\alpha^{1-n},
\end{equation}
where $(\gamma^\star_n)_{n\ge0}$ is a {\sl completely monotone sequence} 
with $\gamma^\star_0=1$.
\end{proposition}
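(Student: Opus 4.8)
The plan is to analyze the recursion \eqref{e:crecurs} directly and extract the claimed structure, using the already-established fact that $V_\alpha$ is completely Bernstein together with Hausdorff's characterization of completely monotone sequences (as moment sequences of measures on $[0,1]$). First I would establish the bounds \eqref{e:Rbds} on the radius of convergence. Since $a_n$ is increasing in $n$ with $a_1=0$ and $a_2<1$ (indeed $a_2 = \frac{2(1-\alpha)}{1+\alpha} + \frac{2\alpha-1}{2\alpha+1}$, which one checks lies in $(0,1)$ for $\alpha\in(0,1)$), and since $\lim_{n\to\infty} a_n/n = \frac{1-\alpha}{1+\alpha}$, a standard comparison argument for the quadratic-type recursion $c_n = a_n^{-1}\sum_{k=1}^{n-1}c_k c_{n-k}$ gives that $R_\alpha := \big(\limsup_n c_n^{1/n}\big)^{-1}$ is finite and positive. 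The lower bound $R_\alpha \ge \frac{1-\alpha}{1+\alpha}$ follows because if one sets $b_n$ by $b_1 = 1$, $b_n = \frac{1+\alpha}{1-\alpha}\cdot\frac1n\sum_{k=1}^{n-1}b_kb_{n-k}$, then since $a_n \ge \frac{1-\alpha}{1+\alpha} n$ for $n\ge 2$ one gets $c_n \le b_n$ by induction, and the generating function $B(s) = \sum b_n s^n$ solves an explicit algebraic (logistic-type) equation whose radius of convergence is $\frac{1-\alpha}{1+\alpha}$. The upper bound $R_\alpha \le a_2$ comes from $c_2 = a_2^{-1}c_1^2 = a_2^{-1}$ together with positivity of all $c_n$ (which is immediate from the recursion and $c_1=1$, $a_n>0$ for $n\ge 2$), since positivity forces $\limsup c_n^{1/n} \ge c_2^{1/1}$... more carefully, $c_n \ge$ a product telescoping from $c_2$, giving the bound; I would spell out the cleanest version, likely $R_\alpha\le 1/c_2 = a_2$ via the Cauchy–Hadamard estimate applied to a subseries, or simply note $V_\alpha(s)\to\infty$ forces a finite radius and locate it by monotonicity of the partial-sum estimates.

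Next, the identity $u_\alpha(z) = V_\alpha(z^\alpha)$ from Proposition~\ref{p:VaCB}(b), combined with the series \eqref{e:ustarseries} that we have just shown converges for $z^\alpha\in(0,R_\alpha)$, shows $V_\alpha(s) = \sum_{n\ge1}(-1)^{n-1}c_n s^n$ for $s\in(0,R_\alpha)$; analyticity in a complex neighborhood of $s=0$ then follows from the positive radius of convergence. The substantive remaining claim is the structural form \eqref{e:cng}: writing $c_n = \gamma^\star_{n-1} R_\alpha^{1-n}$, we must show $(\gamma^\star_n)_{n\ge0}$ is a completely monotone sequence with $\gamma^\star_0 = 1$ (equivalently, by Hausdorff, the moment sequence $\gamma^\star_n = \int_0^1 t^n\,\mu(dt)$ of a probability measure $\mu$ on $[0,1]$). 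That $\gamma^\star_0 = 1$ is just $c_1 = 1$. The idea is to exploit that $V_\alpha$ is a complete Bernstein function with representation \eqref{r:Va}, $V_\alpha(s) = \int_0^\infty(1-e^{-sx})g_{\star\alpha}(x)\,dx$ with $g_{\star\alpha}$ completely monotone, hence (by Bernstein's theorem) $g_{\star\alpha}(x) = \int_0^\infty e^{-x\sigma}\,\rho(d\sigma)$ for a measure $\rho$ on $[0,\infty)$. Then $V_\alpha(s) = \int_0^\infty \frac{s}{s+\sigma}\,\frac{\rho(d\sigma)}{\sigma}$ (after an elementary Fubini computation, valid modulo integrability bookkeeping at $\sigma = 0$), so that, expanding $\frac{s}{s+\sigma} = \sum_{n\ge1}(-1)^{n-1}(s/\sigma)^n$ for $s<\sigma$, one reads off $(-1)^{n-1}c_n = \int_0^\infty \sigma^{-n}\,\frac{\rho(d\sigma)}\sigma$, i.e. $c_n = \int_{(0,\infty)} \sigma^{-n-1}\rho(d\sigma)$ is the $(n-1)$-st... wait — $c_n$ is a Hausdorff-type moment in the variable $1/\sigma$. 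Concretely, $c_{n+1}/c_n = \int \sigma^{-1}\,d\nu_n / \int d\nu_n$ where $d\nu_n \propto \sigma^{-n-1}\rho(d\sigma)$; this exhibits $c_{n+1}/c_n$ as increasing in $n$, and more: the sequence $R_\alpha^{\,n}c_{n+1}$ is, after the normalization that pins the support of the relevant measure into $[0,1]$ (which is exactly what the factor $R_\alpha^{1-n}$ in \eqref{e:cng} accomplishes, since $R_\alpha$ equals the reciprocal of the right edge of the support of the pushforward of $\rho$ under $\sigma\mapsto 1/\sigma$), a genuine Hausdorff moment sequence. So I would identify $\mu$ as (a normalized) pushforward of $\rho$ under $\sigma \mapsto R_\alpha/\sigma$ and verify $\gamma^\star_n = \int_0^1 t^n\,\mu(dt)$ directly from $c_{n+1} = \int_{(0,\infty)}\sigma^{-n-2}\rho(d\sigma)$.

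The main obstacle I anticipate is the careful justification at the endpoint: showing that the representing measure $\rho$ of $g_{\star\alpha}$ has no mass at $\sigma = 0$ and is supported in $[R_\alpha^{-1}\cdot(\text{something}),\infty)$ — precisely, that $R_\alpha$ is the reciprocal of $\operatorname{ess\,inf}(\operatorname{supp}\rho)$ — so that the pushforward lands in $[0,1]$ and the geometric-series interchange converges on the full disk $|s|<R_\alpha$. This requires matching the analytically-defined $R_\alpha$ (from the recursion) with the spectral edge of $\rho$; the cleanest route is probably to note that $V_\alpha$ extends holomorphically precisely to $\mathbb{C}\setminus[-\infty, -R_\alpha]$... actually to $\mathbb{C}\setminus(-\infty,-\sigma_0]$ where $-\sigma_0$ is the nearest singularity, which by Pringsheim's theorem (all $c_n>0$) sits at $s = -R_\alpha$ on the negative axis; since the singularities of a complete Bernstein function given by \eqref{r:Va} lie on $(-\infty,0]$ at $-\operatorname{supp}(\rho)$, we get $\sigma_0 = R_\alpha \cdot(\text{edge})$ matching up, i.e. the edge of $\operatorname{supp}\rho$ is exactly $R_\alpha^{-1}$... no: it is that the edge is at distance $R_\alpha$ from $0$, so the smallest point of $\operatorname{supp}\rho$ is $R_\alpha$, making $\sigma\mapsto R_\alpha/\sigma$ map $\operatorname{supp}\rho\subseteq[R_\alpha,\infty)$ into $(0,1]$ as needed. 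Assembling this Pringsheim-plus-representation-theorem argument rigorously, and handling the $\log$-type borderline integrability of $g_{\star\alpha}$ near $x=0$ (we only know $g_{\star\alpha}(x)\sim \hat c\,\Gamma(\hat\alpha/\alpha)^{-1}x^{-1+\hat\alpha/\alpha}$ from \eqref{e:ga0}, which is integrable near $0$ against $x$, consistent with \eqref{r:Va}), is where the real work lies.
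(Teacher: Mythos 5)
The part of your proposal that needs repair is precisely the bounds \eqref{e:Rbds}. Your lower-bound comparison rests on the inequality $a_n\ge \frac{1-\alpha}{1+\alpha}\,n$ for $n\ge2$, which is false: $a_n-\frac{1-\alpha}{1+\alpha}n=\frac{\alpha n-1}{\alpha n+1}$ is negative whenever $\alpha n<1$ (e.g.\ $n=2$ and $\alpha<\tfrac12$), so the induction $c_n\le b_n$ against your majorizing recursion with the factor $\tfrac1n$ does not close. The correct and sufficient inequality is $a_n\ge\frac{1-\alpha}{1+\alpha}(n-1)$, since $\frac{a_n}{n-1}=\frac{1-\alpha}{1+\alpha}+\frac{2\alpha}{(1+\alpha)(1+\alpha n)}$; with it the geometric ansatz closes directly (if $c_k\le r^{1-k}$ for $k<n$ with $r=\frac{1-\alpha}{1+\alpha}$, then $c_n\le\frac{n-1}{a_n}r^{2-n}\le r^{1-n}$), which is the paper's induction and amounts to putting $\tfrac1{n-1}$ rather than $\tfrac1n$ in your $b_n$. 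More seriously, the upper bound $R_\alpha\le a_2$ is never actually proved in your sketch: knowing $c_2=1/a_2$ together with positivity of all $c_n$ puts no constraint on $\limsup_n c_n^{1/n}$ (the asserted ``$\limsup c_n^{1/n}\ge c_2$'' is not a valid inference, and even $c_2^{1/2}$ would only give $R_\alpha\le a_2^{1/2}$, weaker than $a_2$ since $a_2<1$); the lower bounds one can telescope cheaply from \eqref{e:crecurs}, such as $c_n\ge 2c_{n-1}/a_n$, decay factorially and give nothing. What is needed — and what the paper does — is the matching induction from below: $n\mapsto a_n/(n-1)$ is decreasing, so $a_n/(n-1)\le a_2$ for $n\ge2$, and the ansatz $c_k\ge a_2^{1-k}$ propagates, giving $c_n\ge a_2^{1-n}$ for all $n$, hence $\limsup_n c_n^{1/n}\ge a_2^{-1}$ and $R_\alpha\le a_2$. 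Note your claim that $R_\alpha$ is finite also secretly relies on this step.

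For the structural claim \eqref{e:cng} your route is genuinely different from the paper's and is sound in outline. The paper observes that $v_\star(z)=-V_\alpha(-z)$ is a Pick function analytic on $(-\infty,R_\alpha)$ (because $V_\alpha$ is completely Bernstein and the series converges on $|z|<R_\alpha$) and then simply cites Corollary 1 of \cite{LP2016} on generating functions of Hausdorff moment sequences, which delivers $c_n=\gamma^\star_{n-1}R_\alpha^{1-n}$ at once. You instead re-derive this special case by hand from \eqref{r:Va}: writing $g_{\star\alpha}(x)=\int e^{-x\sigma}\rho(d\sigma)$, obtaining $V_\alpha(s)=\int\frac{s}{s+\sigma}\,\sigma^{-1}\rho(d\sigma)$ (boundedness $V_\alpha\le1$ rules out mass at $\sigma=0$ and makes $\sigma^{-1}\rho$ a probability measure), expanding the geometric series to get $c_n=\int\sigma^{-n-1}\rho(d\sigma)$, and matching $R_\alpha$ with the bottom of $\operatorname{supp}\rho$ via Pringsheim plus the fact that a Stieltjes transform continues analytically across exactly those real intervals charged with no mass. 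That buys self-containedness at the cost of re-proving the cited lemma, including the support-edge bookkeeping you flag; both your argument and the paper's also use (implicitly in both cases) that the sum of the recursion-generated series coincides with $u_\alpha$ near $0$, via substitution into \eqref{eq:SSu1} and the uniqueness in Lemma~\ref{lem:solbeta}. As submitted, however, the proposal does not establish \eqref{e:Rbds}, so the proposition is not yet proved.
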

\begin{proof} It suffices to prove the bounds 
on the radius of convergence and the representation formula \eqref{e:cng},
as the validity of equation \eqref{eq:SSu1} then follows by substitution.
By induction we will establish bounds on the 
radius of convergence of the power series
\begin{equation}\label{e:vstarseries}
v_\star(z) = \sum_{n=1}^\infty c_n z^n \,,
\end{equation}
which is evidently related to $V_\alpha$ by $V_\alpha(z)=-v_\star(-z)$.
Observe that the inequality $c_k \le {m}/{r^k}$ for $1\le k<n$ implies
\[
c_n \le 
\frac{n-1}{a_n} 
\frac{m^2}{r^n} \le  \frac{m}{r^n} \,,
\]
provided that 
\[
m\le \frac{a_n}{n-1} = \frac{1-\alpha}{1+\alpha} + \frac{2\alpha}{(1+\alpha)(1+\alpha n)} \,.
\]
By choosing
\[
m=r= 
\frac{1-\alpha}{1+\alpha}=\beta\alpha\,,
\]
we ensure $c_1=m/r$ and therefore $c_n\le r^{1-n}$ for all $n\ge1$, i.e.,
\begin{equation}
c_n \le \left( \frac{1+\alpha}{1-\alpha}\right)^{n-1} \,, \quad n=1,2,\ldots,
\end{equation}
whence 
\[
v_\star(z) \le \frac{rz}{r-z}<\infty \quad\mbox{for $0<z<r$}.
\]
In a similar way, the choice
\[
M=R = a_2 \ge \frac{a_n}{n-1} 
\]
for all $n\ge2$ ensures $c_n \ge {M}/{R^n}$ for all $n\ge2$, whence
\[
v_\star(z) \ge \frac{Rz}{R-z} \quad\mbox{for $0<z<R$}
\]
By consequence we infer the bounds in \eqref{e:Rbds} hold.

Now, because $V_\alpha$ is completely Bernstein, it is a Pick function
analytic on the positive half-line.
Hence, from what we have shown,
the function $v_\star$ is a Pick function analytic on $(-\infty,R_\star)$.
From this and Corollary 1 of \cite{LP2016}, it follows directly that the 
coefficients $c_n$ may be represented in the form \eqref{e:cng} 
where $\{\gamma_n\}_{n\ge0}$ is a completely monotone sequence
with $\gamma^\star_0=1$.
\end{proof}

\begin{remark}
In the limiting case $\beta=0$, $\alpha=1$, the coefficients $c_n$
reduce to the explicit form appearing in eq. (5.19) of \cite{DLP2017}.
Namely,
\[
c_n = A_n(3,1) = \frac1{3n+1} \binom{3n+1}{n}
\]
in terms of the Fuss-Catalan numbers defined by 
\[
A_n(p,r) = \frac1{pn+r}\binom{pn+r}{n} \,.
\]
This can be verified directly from the recursion formulae
in \eqref{e:crecurs}
by using a known identity for the Fuss-Catalan numbers
\cite[p.~148]{Riordan}.
\end{remark}

\begin{remark}
We are not aware of any combinatorial representation or interpretation 
of the coefficients $c_n(\alpha)$ for $\alpha\in(0,1)$, however.
\end{remark}

\begin{remark} {\it (Nature of the singularity at $R_\alpha$)} 
Numerical evidence suggests that for $0<\alpha<1$, the singularity
at $R_\alpha$ is a simple pole. 
 If true, this should imply that 
as $n\to\infty$, the coefficients $\gamma^\star_n \to \gamma^\star_\infty>0$,
and the completely monotone L\'evy density $g_{\star\alpha}(x)$ for the complete
Bernstein function $V_\alpha(z)$ has exponential decay at $\infty$, with
\[
g_{\star\alpha}(x) \sim C_\star e^{-R_\alpha x} \quad \mbox{as $x\to\infty$},
\]
where $C_\star>0$. 
\end{remark}

\section{A subordination formula}
\label{s:subord}
Here we use the subordination formulae from \cite[XIII.7(e)]{Feller}
as linearized in \cite[Remark 3.10]{ILP2015},
to describe a relation between the completely monotone L\'evy densities  
for the Bernstein functions $u_\alpha$ and $V_\alpha$.  Recall we have shown
\[
u_\alpha(z) = \int_0^\infty (1-e^{-zx})f_{\star\alpha}(x)\,dx 
= V_\alpha(z^\alpha),
\]
where $V_\alpha$ is a complete Bernstein function, with
\[
V_\alpha(z) = \int_0^\infty (1-e^{-zx})g_{\star\alpha}(x)\,dx\,,
\]
for some completely monotone function $g_{\star\alpha}$.
The complete Bernstein function $z^\alpha$ has power-law L\'evy measure 
\[
\nu_0(dx)=c_\alpha x^{-1-\alpha} dx\,,\qquad c_\alpha = \frac{\alpha}{\Gamma(1-\alpha)}.
\]
This is the jump measure for an $\alpha$-stable L\'evy process 
$\{Y_\tau\}_{\tau\ge0}$ (increasing in $\tau$)
whose time-$\tau$ transition kernel $Q_\tau(dy)$ has the Laplace transform
\[
\EE(e^{-qY_\tau}) = \int_0^\infty e^{-qy} Q_\tau(dy) = e^{-\tau q^\alpha}\,.
\]
Recalling
the subordination formula 
in the linearized form (3.20) from \cite{ILP2015},
we infer that the self-similar profile $f_{\star\alpha}$
may be expressed as
\[
f_{\star\alpha}(x) = 
\int_0^\infty Q_\tau(dx) 
g_{\star\alpha}(\tau)\,d\tau \,.
\]

We know that $Q_1(dy)=p_\alpha(y)\,dy$ where $p_\alpha$ is the
maximally skewed L\'evy-stable density from \cite[XVII.7]{Feller} given by 
\begin{equation}\label{e:palpha}
p_\alpha(x) = p(x;\alpha,-\alpha) = \frac{-1}{\pi x} 
\sum_{k=1}^\infty \frac{\Gamma(k\alpha+1)}{k!} (-x^{-\alpha})^k \sin k\pi\alpha\,.
\end{equation}
Then by scaling dual to $\exp(-\tau q^\alpha) = \exp(-(\tau^{1/\alpha}q)^\alpha)$, we find
\[
Q_\tau(dy) = 
p_\alpha\left(\frac{y}{\tau^{1/\alpha}}\right)\frac{dy}{\tau^{1/\alpha}}\,,
\]
and obtain the following.
\begin{proposition}
The self-similar profile $f_{\star\alpha}$ is related to 
the completely monotone L\'evy densitiy $g_{\star\alpha}$ 
of $V_\alpha$ by
\[
f_{\star\alpha}(x) = 
\int_0^\infty 
g_{\star\alpha}(\tau)
p_\alpha\left(\frac{x}{\tau^{1/\alpha}}\right)
\frac{d\tau }{ \tau^{1/\alpha}}
\]
\end{proposition}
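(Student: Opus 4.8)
The statement is a repackaging of the composition identity $u_\alpha(z)=V_\alpha(z^\alpha)$ from \eqref{e:uaVa} through subordination, so the plan is to make that identity explicit at the level of Lévy densities. First I would collect the ingredients already in hand: by Proposition~\ref{p:VaCB}, $V_\alpha$ is complete Bernstein with completely monotone Lévy density $g_{\star\alpha}$, so $V_\alpha(w)=\int_0^\infty(1-e^{-wx})g_{\star\alpha}(x)\,dx$ as in \eqref{r:Va}; the function $z\mapsto z^\alpha$ is the Bernstein transform of the triple $(0,0,\nu_0)$ with $\nu_0(dx)=c_\alpha x^{-1-\alpha}dx$, hence the Laplace exponent of the $\alpha$-stable subordinator $\{Y_\tau\}_{\tau\ge0}$, whose transition kernels $Q_\tau(dy)$ satisfy $\int_0^\infty e^{-zy}Q_\tau(dy)=e^{-\tau z^\alpha}$; and by self-similarity (read off from $e^{-\tau z^\alpha}=e^{-(\tau^{1/\alpha}z)^\alpha}$, so $Y_\tau$ is distributed as $\tau^{1/\alpha}Y_1$) one has $Q_\tau(dy)=p_\alpha\!\left(y/\tau^{1/\alpha}\right)dy/\tau^{1/\alpha}$ with $p_\alpha=p(\cdot;\alpha,-\alpha)$ the maximally skewed stable density of \eqref{e:palpha}, for which $\int_0^\infty e^{-zy}p_\alpha(y)\,dy=e^{-z^\alpha}$.

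Next I would run the computation. Rather than cite \cite[XIII.7(e)]{Feller} / \cite[Remark~3.10]{ILP2015} as a black box, the cleanest route is a direct Fubini argument. Define the candidate density
\[
h(x):=\int_0^\infty g_{\star\alpha}(\tau)\,p_\alpha\!\left(\frac{x}{\tau^{1/\alpha}}\right)\frac{d\tau}{\tau^{1/\alpha}},
\]
and compute its Bernstein transform. Since $Q_\tau$ is a probability measure, $1-e^{-z^\alpha\tau}=\int_0^\infty(1-e^{-zy})Q_\tau(dy)$, so interchanging the two (nonnegative) integrations gives
\[
\int_0^\infty(1-e^{-zx})h(x)\,dx=\int_0^\infty\!\left(\int_0^\infty(1-e^{-zy})Q_\tau(dy)\right)g_{\star\alpha}(\tau)\,d\tau=\int_0^\infty(1-e^{-z^\alpha\tau})g_{\star\alpha}(\tau)\,d\tau=V_\alpha(z^\alpha)=u_\alpha(z).
\]
(Concretely, the substitution $y=x/\tau^{1/\alpha}$ in the inner integral together with $\int_0^\infty e^{-qy}p_\alpha(y)\,dy=e^{-q^\alpha}$ produces $1-e^{-z^\alpha\tau}$.) By Lemma~\ref{l:cmFa} we already know $u_\alpha(z)=\int_0^\infty(1-e^{-zx})f_{\star\alpha}(x)\,dx$, so uniqueness of the Lévy triple in the representation \eqref{def:Btransform} forces $h=f_{\star\alpha}$ a.e., which is the asserted formula; \eqref{e:palpha} then renders it fully explicit.

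\textbf{Main obstacle.} The only point beyond bookkeeping is justifying the interchange of integrations, i.e.\ finiteness of the iterated integral for $z>0$. This is automatic from Tonelli once one knows the collapsed integral $\int_0^\infty(1-e^{-z^\alpha\tau})g_{\star\alpha}(\tau)\,d\tau$ is finite, which holds because $g_{\star\alpha}$ is a Lévy density, i.e.\ integrable against $\tau\wedge1$, consistent with the near-origin asymptotics \eqref{e:ga0}. A secondary routine check is that the one-sided stable kernel $Q_\tau$ genuinely has the stated density on $(0,\infty)$ and obeys the exact scaling $Q_\tau(dy)=p_\alpha(y/\tau^{1/\alpha})\,dy/\tau^{1/\alpha}$; both are standard facts about maximally skewed stable laws. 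No further difficulty is expected.
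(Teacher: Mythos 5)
Your proposal is correct and follows essentially the same route as the paper: both identify $u_\alpha=V_\alpha(\cdot^\alpha)$ as a subordination of the Lévy density $g_{\star\alpha}$ against the $\alpha$-stable kernels $Q_\tau(dy)=p_\alpha\bigl(y/\tau^{1/\alpha}\bigr)\,dy/\tau^{1/\alpha}$. The only difference is that where the paper cites the linearized subordination formula of \cite[XIII.7(e)]{Feller} and \cite[Remark 3.10]{ILP2015}, you verify it directly by a Tonelli computation of the Bernstein transform of the candidate density together with uniqueness of the Lévy representation \eqref{def:Btransform}, which is a sound and self-contained substitute (and since the integrand is nonnegative, Tonelli needs no separate finiteness check).
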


We note that in the limit $\alpha\to1$ one has $p_\alpha(y)\,dy\to \delta_1$,
the delta mass at $1$, consistent with $g_{\star\alpha} \to f_{\star\alpha}$.
Moreover, note that from \eqref{e:palpha}
the large-$x$ behavior of the $\alpha$-stable density $p_\alpha$ is 
\begin{equation}
p_\alpha(x) \sim {\Gamma(1+\alpha)}\frac{\sin \pi\alpha}{\pi}
x^{-\alpha-1} 
\sim f_{\star\alpha}(x)\,,\quad x\to\infty\,,
\end{equation}
due to Euler's reflection formula for the $\Gamma$-function.
This is consistent with the fact that the Bernstein transform
of $p_\alpha$ is 
\[
\int_0^\infty(1-e^{-sx})p_\alpha(x)\,dx = 1-e^{-s^\alpha} \sim s^\alpha
\sim u_\alpha(s)\,, \quad s\to0.
\]

\section*{Acknowledgements}
BN and RLP acknowledge support
from the Hausdorff Center for Mathematics and the CRC 1060 on {\it Mathematics of emergent effects}, Universit\"at Bonn. 
This material is based upon work supported by the National
Science Foundation under grants 
DMS 1514826 and 1812573 (JGL) and
DMS 1515400 and 1812609 (RLP),
partially supported by the Simons Foundation under grant 395796, 
by the Center for Nonlinear Analysis (CNA)
under National Science Foundation PIRE Grant no.\ OISE-0967140,
and by the NSF Research Network Grant no.\ RNMS11-07444 (KI-Net).
JGL and RLP acknowledge support from the 
Institut de Math\'ematiques, Universit\'e Paul Sabatier, Toulouse and the
Department of Mathematics, Imperial College
London under Nelder Fellowship awards. 


\bibliographystyle{plain}

\ifdraft
{\bf Items to finish:}

\begin{itemize}
\item  [done] Edit acknowledgments
\item [done] (remark added at end of sec 3)
(Can we get necessary and/or sufficient conditions using regular variation?)
\item [done] Describe test sequences better in \eqref{eq:CF2_disc}? [Bounded as in \cite{DLP2017}]
\end{itemize}
\fi

\end{document}